\documentclass[12pt, reqno]{amsart}
\usepackage{amsmath, amsthm, amssymb, amscd, mathrsfs, amsfonts, bbm, 
graphicx, tikz, tikz-cd, dsfont, pgf, hyperref, mathtools, scalerel, float, 
multicol, xcolor, enumitem, epsfig, epstopdf, graphicx, subfigure, wrapfig}
\usepackage[margin=2cm]{geometry}
\usepackage[utf8]{inputenc}
\usepackage[T1]{fontenc}
\usepackage[font=small, labelfont=bf]{caption}
\makeatletter \renewcommand{\fnum@figure}{Fig. \thefigure} \makeatother
\newtheorem{theorem}{Theorem}[section]

\newtheorem{lemma}[theorem]{Lemma}

\newtheorem{proposition}[theorem]{Proposition}
\newtheorem{remark}[theorem]{Remark}

\newtheorem{Assertion}[theorem]{Assertion}
\numberwithin{equation}{section}
\numberwithin{figure}{section}

\begin{document}
\title[On the Auerbach bases of $l^{n}_p$ spaces]{On the Auerbach bases of $l^{n}_p$ spaces}
\subjclass[2010]{47l05; 46b20; 52A21}
\keywords{$l^n_p$-spaces; Birkhoff-James orthogonality; Auerbach bases}
\author{Arun Maiti and Debmalya Sain}
\begin{abstract} In this paper, we study Auerbach basis of the Banach spaces $l^n_p$. We provide 
a complete classification of the spaces in terms of the cardinality of their bases. We also 
give a complete description of these bases for $l^3_p$ ($l^2_p$ is easy).
 \end{abstract}
\maketitle
\section{Introduction}
\textit{Birkhoff-James orthogonality} is a generalization of the notion of orthogonality in Hilbert 
spaces to Banach spaces introduced by Birkhoff in \cite{JG35}. 
Let $\mathbb{X}$ be a $n$ dimensional real Banach space and $S_{\mathbb{X}}$ denote its unit sphere. 
For $x, y \in \mathbb{X}$, $x$ is said to be orthogonal 
to $y$ in the sense of Birkhoff-James, denoted by $x \perp_B y$, if 
\[|| x|| \leq || x + \lambda y|| \quad \mbox{for all} \quad \lambda \in \mathbb{R}.\]
The concept of Auerbach basis of a Banach space is a close analogue of the concept of orthonormal 
basis of a Hilbert space. A basis $\mathcal{B}$ of a Banach space 
$(X, ||.||)$ is called an Auerbach basis if for all $x \in \mathcal{B}$, 
\[ ||x|| = 1 \quad \mbox{and} \ x \perp_B \mathrm{span} \{\mathcal{B} \setminus x\}.\]
Equivalently, $\mathcal{B}=\{v_1, \cdots, v_n\}$ is an Auerbach basis if 
\begin{equation}\label{james}||v_i|| = 1 \ \mathrm{and} \ ||v^i|| = 1 \ \mathrm{for} \ i = 1, 2, \cdots, n, 
\end{equation}
where ${v^1, ..., v^n}$ is a basis of the dual space $X^{*}$ of $X$ 
dual to ${v_1, \cdots, v_n}$, i.e., $v^i(v_j) = \delta_{ij}$.
\newline 
A basis vector of an Auerbach basis will be referred to as a \textit{Auerbach basis vector}.
\newline
For the space $l^n_p := (\mathbb{R}^n, ||.||_p)$, the norm $||.||_p$ is invariant under signed 
permutations of co-ordinates and hence the Birkhoff-James orthogonality. We shall 
often express Auerbach bases of $l^n_p$ by $n \times n$ matrices whose rows are basis vectors, and 
two bases are said to be \textit{equivalent} if one can be obtained from the other by negating 
rows or columns, or by interchanging rows or columns. 
\newline
One can also use semi-inner product $[. , . ] : \mathbb{X} \times \mathbb{X} \rightarrow \mathbb{R}$ 
to interpret the Birkhoff-James orthogonality. For the space $l^n_p$, $1< p < \infty$, there is an 
unique semi-inner product $[., .]$ corresponding to the norm $||.||p$ given by
\[ [y, x] = \frac{\sum^n_{j=1} y_j x_j |x_j|^{p-2}}{
||x||^{p-2}_p}, \]
where $x=(x_1, x_2, \cdots, x_n) \neq \mathbf{0}$  and $y=(y_1, y_2, \cdots, y_n)$. We refer the readers 
to \cite{L61, G67, S20} for more information on semi-inner-products and their potential applications 
in studying geometry of Banach spaces, and \cite{CSS19} particularly for the $l^n_p$ spaces.
\newline
Denoting by ${^p}x:=(x_1|x_1|^{p-2}, $ $x_2|x_2|^{p-2}, \cdots, $ $x_n|x_n|^{p-2})$, the gradient of 
the function $||.||^p$, we have
\begin{equation}\label{semidef}x \perp_B y \quad \iff [y, x]=0 \iff \quad y. \ {^p}x=0.
\end{equation} 
We see that Birkhoff-James orthogonality is not sensitive to scaling. Further, for vectors with 
components in $\{0, 1, -1\}$, Birkhoff-James orthogonality in $l^n_p$ spaces is equivalent to the 
Euclidean orthogonality. So, up to scalar multiples of the rows, matrices with mutually orthogonal rows 
and entries in $\{0, 1, -1\}$ correspond to Auerbach bases of $l^n_p$. We refer to these bases as 
\textit{stationary bases}. This includes, for example, weighing matrices which are the matrices with 
entries in $\{0, 1, -1\}$ and satisfying $WW^T = mI_n$ for some integer $m \leq n$ with $I_n$ 
denoting the identity matrix. We refer readers to \cite{S17} for more information on such matrices.
\newline
In \S \ref{preli}, we give a brief account of some of the known results on the existence of Auerbach 
bases for general Banach spaces. In \S \ref{number}, we prove that there are only finitely many 
Auerbach bases of $l^n_p$ for $1<p< \infty$, $p \neq 2$ for all $n$, Theorem \ref{finite}. 
In \S \ref{infinite}, we show, however, that there are infinitely many Auerbach bases of 
$l^n_p$ for $p= 1, 2, \infty$, $n\geq 3$.
\newline
In \S \ref{l3p}, we use elementary analysis to charactarize the Auerbach 
bases of $l^3_p$. This produces concrete examples of non-stationary bases for all $p$.
\newline
A \textit{strong Auerbach basis} of the Banach space $l^n_p$, $1 < p < \infty$ 
is defined to be an Auerbach basis such that the span of any subset of $m$ basis vectors is isometrically
isomorphic to the Banach space $l^m_p$ for all $m \leq n$. In \S \ref{sec4}, we give a complete description of the 
strong Auerbach bases of $l^n_p$ for all $n, p$.

\section{Preliminaries}\label{preli}
Auerbach first established the 
existence of at least one Auerbach basis for any finite dimensional Banach space in his doctoral thesis \cite{A30}. Proofs were then published 
by Day in \cite{D47} and independently by Taylor in \cite{T47}. In their proofs the basis vectors 
are selected by maximizing the volume of the convex symmetric envelope of $n$-tuples of vectors 
from the unit sphere. In \cite{P95}, Plichko proved the existence of two different Auerbach bases for 
infinite dimensional Banach space that are not a Hilbert space. Here bases are identified if they differ 
only by a permutation or multiplication by scalars of absolute value one. 
\newline
 We may view $S_{\mathbb{X}}^n$, the cartesian product of $n$ copies of $S_{\mathbb{X}}$, as the set of 
 $n \times n$ real matrices whose rows are unit vectors in $\mathbb{X}$. We denote by $S_{\mathbb{X}, >0}^n$, 
 the subspace of non singular matrices in $S_{\mathbb{X}}^n$. 
 \newline
 Recall that a Banach space $\mathbb{X}$ is called \textit{smooth} if at every point of the unit ball of 
 $\mathbb{X}$ there is only one supporting hyperplane. We have the following alternative description of the bases 
 for smooth Banach spaces.
\begin{proposition} \label{critical}A set of vectors $\{v_1, \cdots, v_n\}$ form an Auerbach basis of a smooth 
Banach space $\mathbb{X}$ if and only if it is a critical point of the
determinant function $det: S_{\mathbb{X}, >0}^n \rightarrow \mathbb{R}$. 
\end{proposition}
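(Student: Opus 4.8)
The plan is to realize $S_{\mathbb{X}}^n$ as a $C^1$ manifold and compute the critical points of $\det$ by a Lagrange-multiplier type argument, with smoothness entering precisely through the description of tangent spaces. In finite dimensions, smoothness of the unit ball of $\mathbb{X}$ is equivalent to the norm being Gateaux, hence Fréchet, and in fact continuously, differentiable on $\mathbb{X}\setminus\{0\}$; consequently $S_{\mathbb{X}}$ is a $C^1$ hypersurface and, at a point $v\in S_{\mathbb{X}}$, its tangent space is $T_v S_{\mathbb{X}}=\ker f_v$, where $f_v\in\mathbb{X}^*$ is the \emph{unique} norming functional at $v$, i.e. $\|f_v\|=f_v(v)=1$. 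Writing a point of $S_{\mathbb{X}}^n$ as an $n\times n$ matrix $M$ with rows $v_1,\dots,v_n\in S_{\mathbb{X}}$, the tangent space of $S_{\mathbb{X}}^n$ at $M$ is then the product $\prod_{i=1}^n\ker f_{v_i}$, the $i$-th factor recording the admissible variations of the $i$-th row, and $S_{\mathbb{X},>0}^n$ is an open submanifold since invertibility is an open condition.

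Next I would compute the differential of $\det$. Since $\det$ is multilinear in the rows, its derivative in the direction of a variation $w$ of the $i$-th row of $M$ equals $\det(v_1,\dots,v_{i-1},w,v_{i+1},\dots,v_n)$; expanding along the $i$-th row and using the adjugate formula (valid because $M\in S_{\mathbb{X},>0}^n$, so $M^{-1}$ exists) this is $\det(M)\,v^i(w)$, where $v^i\in\mathbb{X}^*$ is the functional represented by the $i$-th column of $M^{-1}$. One checks directly that $v^i(v_j)=\delta_{ij}$, so $v^i$ is exactly the dual basis functional occurring in \eqref{james}. Hence $M$ is a critical point of $\det$ on $S_{\mathbb{X},>0}^n$ if and only if, for every $i$, the functional $\det(M)\,v^i$ — equivalently $v^i$, since $\det(M)\neq0$ — vanishes on $\ker f_{v_i}$. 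As $\ker f_{v_i}$ is a hyperplane and $v^i(v_i)=1\neq0$, this forces $v^i=\lambda_i f_{v_i}$ with $\lambda_i\neq0$; evaluating at $v_i$ gives $\lambda_i=1$. Thus the critical point condition is equivalent to $v^i=f_{v_i}$ for all $i$, i.e. to $\|v^i\|=\|f_{v_i}\|=1$ for all $i$.

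This yields both implications at once. If $M\in S_{\mathbb{X},>0}^n$ is a critical point, then $\{v_1,\dots,v_n\}$ is a basis (as $M$ is invertible) with each $\|v_i\|=1$, and the above gives $\|v^i\|=1$, so $\{v_i\}$ is an Auerbach basis by \eqref{james}. Conversely, if $\{v_1,\dots,v_n\}$ is an Auerbach basis, then $M$ is invertible, hence $M\in S_{\mathbb{X},>0}^n$; from $v^i(v_i)=1=\|v^i\|\,\|v_i\|$ the functional $v^i$ norms $v_i$, and by smoothness the norming functional at $v_i$ is unique, so $v^i=f_{v_i}$, whence the differential of $\det$ vanishes on $T_MS_{\mathbb{X}}^n$. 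I expect the only delicate point to be the first paragraph: making the manifold structure of $S_{\mathbb{X}}^n$ and the identification $T_vS_{\mathbb{X}}=\ker f_v$ rigorous, which is exactly where smoothness is indispensable (without it $\ker f_v$ is not even well defined); the differential computation and the concluding functional-theoretic step are routine linear algebra.
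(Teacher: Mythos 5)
Your argument is correct and follows essentially the same route as the paper: both realize $S_{\mathbb{X}}^n$ as a differentiable manifold via smoothness of the norm and reduce criticality of $\det$ to the vanishing of each single-row directional derivative $\det(v_1,\dots,u_i,\dots,v_n)$ on $T_{v_i}S_{\mathbb{X}}$. The only cosmetic difference is the endpoint: you identify that derivative as $\det(M)\,v^i(u_i)$ and conclude $v^i=f_{v_i}$, i.e.\ $\|v^i\|=1$ as in \eqref{james}, whereas the paper concludes $\mathrm{span}\{v_j: j\neq i\}=T_{v_i}S_{\mathbb{X}}$ and invokes the supporting-hyperplane definition of Birkhoff--James orthogonality --- two equivalent formulations of the same condition.
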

\begin{proof} 
 By Theorem 4.1 in \cite{J47}, the smoothness of $\mathbb{X}$ is equivalent to the Gateaux differentiablity 
 of the norm at all non-zero points of $\mathbb{X}$. The norm is also a Lipschitz function in the finite dimension, 
 so Gateaux derivative is the usual derivative. Consequently, the unit sphere $S_{\mathbb{X}}^n$ is a differentiable 
 manifold, as does the subspace $S_{\mathbb{X}, >0}^n$, and the determinant $det: S_{\mathbb{X}, >0}^n \rightarrow \mathbb{R}$ 
 is a differentiable function \cite{MT97}. By definition, a set of vectors $\{v_1, \cdots, v_n\}\in S_{\mathbb{X}, >0}^n$ 
 is a critical point of the determinant if for all vector $u_k \in T_{v_k} S_{\mathbb{X}}$, 
 the tangent space to $S_{\mathbb{X}}$ at $v_k$, we have
\[ \sum_{k=1}^n \det(v_1, \cdots, v_{k-1}, u_k, v_{k+1}, \cdots, v_n)=0.\]
This happens if and only if
$\det(v_1, \cdots, v_{k-1}, u_k, v_{k+1}, \cdots, v_n)=0$ for all $k$ and for all $u_k \in T_{v_k} S_{\mathbb{X}}$. 
This means
 \[ \mbox{span}(v_1, \cdots, v_{k-1}, v_{k+1}, \cdots, v_n)=T_{v_k}S_{\mathbb{X}}\] for all $k$. 
Now by definition, the set of vectors $(v_1, v_2, \cdots, v_n)$ form an Auerbach basis. 
\end{proof} 
In \cite{WW17}, using a similar interpretation of Auerbach bases for general Banach spaces and the Lusternik–Schnirelman theory
of the critical points, 
Weber and Wojciechowski proved that there exist at least $(n-1)n/2 + 1$ Auerbach bases of Banach spaces of dimension $n$.
\newline
\subsection{The Sylvester construction:}\label{syl}
One can construct examples of Auerbach bases of higher dimensions by simply taking direct sums of Auerbach bases of lower dimensions.
\newline
We also observe that by definition, the Sylvester construction for Hadamard matrices applies to Auerbach bases, 
i.e., if $H$ is a matrix of order $n$ whose rows form an Auerbach basis 
of $(\mathbb{R}^n, ||.||)$, then the rows of the partitioned matrix
${\displaystyle {\begin{bmatrix}H&H\\H&-H\end{bmatrix}}}$
form an Auerbach basis of $(\mathbb{R}^{2n}, ||.|| \oplus_1 ||.|| )$. 
\section{Cardanality of the Auerbach bases of \texorpdfstring{$l^n_p$}{text}} \label{number}
It is easy to verify using the definition that up to equivalence, we only have the following two bases of $l^2_p$. 
\newline
$I_2$, 
$\begin{bmatrix}
 \frac{1}{\sqrt[\leftroot{-2}\uproot{2}p]{2}} & \frac{1}{\sqrt[\leftroot{-2}\uproot{2}p]{2}} \\
 \frac{1}{\sqrt[\leftroot{-2}\uproot{2}p]{2}} & -\frac{1}{\sqrt[\leftroot{-2}\uproot{2}p]{2}} 
\end{bmatrix}$
\newline
For dimension above $2$ the situation becomes quite complicated as it is often the case in Banach geometry. 
Our main result in the higher dimensions is the following. 
\begin{theorem}\label{finite} There are only finitely many Auerbach bases of $l^n_p$ for all $n$ and 
$1<p< \infty$ with $p \neq 2$, and the number does not depend on $p$.
\end{theorem}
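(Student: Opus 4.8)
The plan is to translate the Auerbach condition into a single matrix equation, show that every solution is non‑degenerate (hence isolated) when $p\neq 2$, and then combine compactness with a deformation‑plus‑duality argument to get both finiteness and $p$‑independence. Write $\phi_p(t)=\operatorname{sign}(t)\,|t|^{p-1}$, let $\Phi_p$ denote the coordinatewise application of $\phi_p$ to a vector or matrix, and recall $\phi_p^{-1}=\phi_q$ for the conjugate exponent $q$. Using \eqref{semidef}, the linearity of the semi‑inner product in its first slot, and the normalisation, an $n\times n$ matrix $V$ with rows $v_1,\dots,v_n$ gives an Auerbach basis of $l^n_p$ if and only if
\[
\Phi_p(V)\,V^{T}=I_n ,
\]
the off‑diagonal entries encoding $v_j\cdot{}^{p}v_i=0$ for $i\neq j$ and the diagonal entries encoding $\sum_k|v_{ik}|^p=1$; the rows of $\Phi_p(V)$ are then automatically the dual basis, and since $(p-1)q=p$ this equation already forces $\|{}^{p}v_i\|_q=1$, so the dual normalisation in \eqref{james} comes for free. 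It also forces $|v_{ik}|\le 1$ for all $i,k$. Hence the solution set $\mathcal{A}_p$ is a bounded closed subset of $M_n(\mathbb{R})$, so it is compact, and it suffices to prove $\mathcal{A}_p$ is discrete when $p\neq 2$; writing $c(p)=\#(\mathcal{A}_p/\!\sim)$, it then remains to see $c$ is constant.

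First I would treat $V\in\mathcal{A}_p$ with no zero entries. Differentiating $\Phi_p(V)V^{T}=I_n$ along $V+tH$ gives, with $W=\Phi_p(V)$ and $\dot W_{ik}=(p-1)|v_{ik}|^{p-2}h_{ik}$, the relation $\dot W V^{T}+WH^{T}=0$, i.e. $\dot W=-WH^{T}W$ (using $W^{-1}=V^{T}$); together with the entrywise formula for $\dot W$ this is a homogeneous linear system in $H$, and the claim is that for $p\neq 2$ its only solution is $H=0$, so $V$ is an isolated critical point of $\det$ (cf. Proposition~\ref{critical}) and isolated in $\mathcal{A}_p$. The computation is modelled on the $2\times 2$ Hadamard basis, where an auxiliary matrix $K$ is forced to satisfy $((p-1)^2-1)K=0$; the general case should follow by the same "apply the identity $WV^T=VW^T=I_n$, transpose, and substitute back" maneuver, the point being that $(p-1)^2\neq 1$ exactly because $p\neq 2$ (this inequality fails at $p=2$, where the orthogonal group gives a continuum of Auerbach bases). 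I expect this linear‑algebra step, carried out with the non‑uniform weights $|v_{ik}|^{p-2}$ present, to be the technical core.

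The genuine obstacle is a basis $V\in\mathcal{A}_p$ with some zero entries (e.g. $I_n$), where $\Phi_p$ is not differentiable. Here I would argue by \emph{support rigidity}: stratify $\mathcal{A}_p$ by the zero/nonzero pattern of $V$ (finitely many), and show a zero entry cannot be opened by a small perturbation inside $\mathcal{A}_p$. Concretely, if $\tilde V\in\mathcal{A}_p$ is close to $V$, with the same signs on $\operatorname{supp}(V)$ but $\tilde v_{i_0k_0}\neq 0$ for some $(i_0,k_0)\notin\operatorname{supp}(V)$, then substituting the small quantities into the orthogonality equations of $\tilde V$ forces a relation of the form $|\tilde v_{i_0k_0}|^{\alpha}=(\text{a quantity bounded away from }0\text{ and }\infty)$ with a nonzero exponent $\alpha$ built from $p-1$ and $p$ (the $n=2$ model gives $\alpha=(p-1)^2-1$); since $|\tilde v_{i_0k_0}|\to 0$ this is absurd. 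Thus near any $V\in\mathcal{A}_p$ every member of $\mathcal{A}_p$ shares $\operatorname{supp}(V)$, which reduces matters to the full‑support analysis above on each stratum, so $\mathcal{A}_p$ is discrete; being compact and discrete, it is finite.

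Finally, for the independence of $c(p)$: the bound $|v_{ik}|\le 1$ is uniform, so for any compact $J\subset(1,2)$ the set $\{(V,p):p\in J,\ \Phi_p(V)V^{T}=I_n\}$ is compact, and, together with the non‑degeneracy (interpreted within the rigid support strata as above) and the joint continuity of $(V,p)\mapsto\Phi_p(V)$, the implicit function theorem plus this compactness make $p\mapsto c(p)$ locally constant on $(1,2)$, hence constant there, and likewise constant on $(2,\infty)$. The two values coincide by duality: if $\{v_1,\dots,v_n\}$ is an Auerbach basis of $l^n_p$, then its dual basis $\{{}^{p}v_1,\dots,{}^{p}v_n\}$ is an Auerbach basis of $l^n_q=(l^n_p)^{*}$ (the two conditions in \eqref{james} are symmetric under passage to the dual basis, and $\|{}^{p}v_i\|_q=1$ as noted), and doing this twice returns the original basis; since this bijection respects the equivalence relation, $c(p)=c(q)$, and as $q=p/(p-1)$ interchanges $(1,2)$ and $(2,\infty)$, $c$ is constant on all of $(1,\infty)\setminus\{2\}$, which is the assertion.
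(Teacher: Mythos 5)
Your overall strategy --- recast the Auerbach condition as the system $\Phi_p(V)V^T=I_n$ (which is exactly \eqref{defab1}), show each solution is isolated, invoke compactness for finiteness, and then get $p$-independence from an implicit-function-theorem continuation plus duality --- is the same as the paper's. But the central step, isolation of every solution, is asserted rather than proved, and the way you propose to prove it is likely to fail. Your linearized system is $(p-1)\,D\odot H=-WH^TW$ with the entrywise weights $D_{ik}=|v_{ik}|^{p-2}$; the ``transpose and substitute back'' maneuver that yields $((p-1)^2-1)K=0$ in the $2\times 2$ Hadamard case works only because there $D$ is a constant matrix and $W$ is a scalar multiple of $V$, so conjugation by $W$ commutes with the Hadamard multiplication by $D$. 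For a genuine basis with entries of different absolute values --- e.g.\ the basis $J_p$ of $l^3_p$ exhibited in \S\ref{l3p}, whose entries are $1$ and $r_p$ up to normalization --- $D$ is non-constant, the two operations do not commute, and no identity of the form $((p-1)^2-1)(\cdot)=0$ drops out. You flag this yourself (``I expect this linear-algebra step\dots to be the technical core''), but that core is precisely the theorem. Worse, it is not even clear that the Jacobian at an actual Auerbach basis \emph{is} invertible in general: the paper deliberately avoids having to prove this. Its device is to rescale the rows by small parameters $\epsilon_i$, so that the scaled point solves the modified system \eqref{defab2}; the Jacobian at the scaled point becomes block-diagonally dominated (the off-diagonal blocks carry higher powers of $\epsilon_i$ than the diagonal blocks $A_i$, which are invertible because the vectors ${}^pa^{(i)}$ are linearly independent), whence an inductive expansion $\det(M_{i,i})=\epsilon_i^p\det(A_{i,i})\det(M_{i-1,i-1})+\epsilon_i^{p+1}P(\epsilon_i)$ gives nonsingularity for small $\epsilon_i$, and isolation is then transported back to the unscaled solution because the scaling is a homeomorphism between the two solution sets. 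Your ``support rigidity'' paragraph has the same defect: for $n>2$ the orthogonality equations do not obviously collapse to a single relation $|\tilde v_{i_0k_0}|^{\alpha}=O(1)$ with $\alpha\neq 0$, and no computation is offered. (For $p>2$ the map is in fact $C^1$ everywhere, so the stratification is unnecessary there --- but you would still need invertibility of the Jacobian at bases with zero entries, which returns you to the unproved core.)

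The pieces of your argument that do work are the reformulation $\Phi_p(V)V^T=I_n$ together with the observation that $\|{}^pv_i\|_q=1$ comes for free (so the dual normalisation in \eqref{james} is automatic), the compactness of the solution set, and the duality bijection $\{v_i\}\mapsto\{{}^pv_i\}$ identifying the bases of $l^n_p$ and $l^n_q$; these all match the paper. The continuation argument for constancy of the count in $p$ is also essentially the paper's (local submersion theorem plus an open-and-closed argument on the connected interval), but it too presupposes the non-degeneracy you have not established. To complete the proof along your lines you would either have to prove injectivity of the weighted linear map $H\mapsto (p-1)D\odot H+WH^TW$ at an arbitrary Auerbach basis, or adopt the paper's $\epsilon$-scaling detour, which replaces that global linear-algebra claim by a perturbative one.
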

\begin{proof}
It is enough to prove the statement of theorem for $p>2$, as the proof for $1<p<2$ would then follow 
from the equivalent definition of Auerbach basis in $\ref{james}$, and the fact that the dual space 
of $l^n_p$ is isomorphic to $l^n_q$ for $\frac{1}{p} + \frac{1}{q}= 1$. 
\newline
By Equation \ref{semidef}, Auerbach bases of $l^n_p$ are the solutions of the following system of equations with 
$n^2$ unknowns $x^{(i)}_k, i, k = 1, 2, \cdots, n$
\begin{equation}\label{defab1} 
x^{(i)}. \ {^p}x^{(j)}= \delta_{ij} \ \mathrm{for} \ i, j = 1, 2, \cdots, n \end{equation}
 
 If the set of row vectors $(a^{(1)}, a^{(2)}, \cdots, a^{(n)}) $ is a solution of the above equations then $(\epsilon_1 a^{(1)}, $ $\epsilon_2 a^{(2)}, $ $ \cdots, \epsilon_n a^{(n)})$ is a 
 solution of the following system of equations
 \begin{equation}\label{defab2} x^{(i)}. \ {^p}x^{(j)}= \epsilon_i^{p} \delta_{ij} \ \mathrm{for} \ i, j = 1, 2, \cdots, n, 
 \end{equation}
 and vice-versa. 
Let $ f^p_{ij}: \mathbb{R}^{n^2} \rightarrow \mathbb{R}, 
f^p_{ij}([x^{(i)}_j])= x^{(i)}. {^p}x^{(j)} $, and $M$ denote the Jacobian of the functions 
$\{f^p_{ij}\}$ at $(\epsilon_1 a^{(1)}, \epsilon_2 a^{(2)}$, $ \cdots, \epsilon_n a^{(n)})$. 
Further, let $M_{i, i}$ denote the submatrix of $M$ by deleting the last $n^2-ni$ rows and columns of $M$. So, $M_{ni, ni}^{\intercal}=$
\[ \left(\begin{array}{@{}c|c@{}|c@{}|c@{}}
 \begin{matrix}
 A_1^{\intercal}
 \end{matrix}
 & \begin{matrix} \epsilon_2 a^{(2)}_1 \hat{p}|\epsilon_1a^{(1)}_1|^{\bar{p}} & 0 \cdots\\
\epsilon_1a^{(2)}_2 \hat{p}|\epsilon_1a^{(1)}_2 |^{\bar{p}} & 0 \cdots \\
 \vdots & \vdots \\
\epsilon_1 a^{(2)}_n \hat{p}|\epsilon_1a^{(1)}_n|^{\bar{p}}& 0 \cdots
 \end{matrix} \
 &\cdots
 & \begin{matrix} 0 \cdots& 0 & \epsilon_i a^{(i)}_1\hat{p}|\epsilon_1a^{(1)}_1|^{\bar{p}} & 0 \cdots\\
 0 \cdots& 0 & \epsilon_i a^{(i)}_2\hat{p}|\epsilon_1a^{(1)}_2|^{\bar{p}} & 0 \cdots\\
 \vdots & \vdots & \vdots & \vdots\\
 0 \cdots & 0 & \epsilon_i a^{(i)}_n\hat{p}|\epsilon_1a^{(1)}_n|^{\bar{p}}& 0 \cdots
 \end{matrix} \\
\hline
 \begin{matrix}
 0 & \epsilon_1 a^{(1)}_1|\epsilon_2a^{2}_1|^{\bar{p}} & 0 &\cdots \\
 0 & \epsilon_1 a^{(1)}_2|\epsilon_2a^{2}_2|^{\bar{p}} & 0 &\cdots\\
 \vdots & \vdots & \vdots & \\
 0 & \epsilon_1 a^{(1)}_n|\epsilon_2a^{2}_n|^{\bar{p}} & 0 & \cdots
 \end{matrix} 
 &
 A_{2}^{\intercal}
 & 
 \cdots
 & \begin{matrix}
 \cdots
 \end{matrix}\\
 \hline
 \vdots & \vdots & \ddots & \vdots \\
\hline
 \begin{matrix}
 0 \cdots 0 & \epsilon_1 a^{(1)}_1|\epsilon_ia^{(i)}_1|^{\bar{p}}&0 \cdots\\
 0 \cdots 0 & \epsilon_1 a^{(1)}_2|\epsilon_ia^{(i)}_2|^{\bar{p}} &0 \cdots\\
 \vdots & \vdots & \vdots \\
 0 \cdots 0 & \epsilon_1 a^{(1)}_n|\epsilon_ia^{(i)}_n|^{\bar{p}} &0 \cdots
 \end{matrix} &
 \begin{matrix}
 \cdots
 \end{matrix}\quad
 &\cdots
 & A_i^{\intercal}
\end{array}\right)
\]

where
$A_i= \begin{bmatrix}
 a^{(1)}_1|a^{(1)}_1|^{\bar{p}} & a^{(1)}_2|a^{(1)}_2|^{\bar{p}} & \cdots & a^{(1)}_n|a^{(1)}_n|^{\bar{p}} \\
 a^{(2)}_1|a^{(2)}_1|^{\bar{p}} & a^{(2)}_2|a^{(2)}_2|^{\bar{p}} & \cdots & a^{(2)}_n|a^{(2)}_n|^{\bar{p}} \\
 \vdots & \vdots & \ddots & \vdots \\
 p\epsilon^p_ia^{(i)}_1|a^{(i)}_1|^{\bar{p}} & p\epsilon^p_ia^{(i)}_2|a^{(i)}_2|^{\bar{p}} & \cdots & p\epsilon^p_ia^{(i)}_n|a^{(i)}_n|^{\bar{p}}\\
 \vdots & \vdots & \ddots & \vdots \\
 a^{(n)}_1|a^{(n)}_1|^{\bar{p}} & a^{(n)}_2|a^{(n)}_2|^{\bar{p}} & \cdots & a^{(n)}_n|a^{(n)}_n|^{\bar{p}}
 \end{bmatrix}$
 and $\hat{p}= p-1$, $\bar{p}= p-2$.
 \newline 
By definition, $ a^{(i)} . {^p} a^{(j)}=0$ for $i \neq j$, and $ a^{(i)}.{^p}a^{(i)} \neq 0$ for all $i$. 
So, by induction, the the row vectors $ \big({^p}a^{(1)}, {^p}a^{(2)}, \cdots, {^p}a^{(n)} \big)$ are 
linearly independent, consequently, $A_i$'s are all non-singular matrices for all $\epsilon_i>0$. 
\newline
We can treat the determinant of  the submatrix $M_{i, i}$, $i\leq n$, as a function of the variables 
$\epsilon_j$, $j \leq i$. We will show by induction that $M_{i, i}$ is non-singular for some values 
of $\epsilon_j$s. For $i=1$, $M_{1, 1}= A_1$ is clearly non-singular for all $\epsilon_1>0$. 
Let us assume that the submatrix $M_{i-1, i-1}$ is non singular for 
\[(\epsilon_{1}, \epsilon_{2}, \cdots, \epsilon_{i-1}) \in (0, \delta_1) \times (0, \delta_{2}) 
\times \cdots \times (0, \delta_{i-1}) \ \mathrm{for \ some} \ \delta_{1}, \delta_{2}, \cdots, \delta_{i-1}>0. \]
Note that each term in the last $n$ rows and columns of $M_{i, i}$ outside the matrix $A_i$ is a multiple of $\epsilon_i$, 
and only the entries in the $i$-th column of $A_i^{\intercal}$ are multiple of $\epsilon_i$. So we can write
\[ \det(M_{i, i})= \epsilon_i^{p}\det(A_{i, i}) \det(M_{i-1, i-1}) + \epsilon_i^{p+1} P(\epsilon_{i}), \] 
where $P$ is a bounded function. So, for sufficiently small $\delta_i>0$, 
\[ \det(M_{i, i}) \neq 0 \ \mathrm{for \ all} \ \epsilon_j \in (0, \delta_{j}) \ \mathrm{for} \ j= 1, 2, \cdots, i. \] 
This completes the induction, and thus $M_{n^2, n^2}=M$ is nonsingular. Using inverse function theorem, we deduce that 
the solutions of Equation \ref{defab2} are isolated, and so do the solutions of Equation \ref{defab1}. 
The first part of the statement of the theorem now follows as the solutions lies in a compact space.
\newline 
For the second part, let $[b_j^{(i)}]$ be a solution of the equations $f^p_{ij}= \delta_{ij}$ for some $t$. Then by the first part of the theorem, there is a neighborhood $B_\epsilon$ of $[b_j^{(i)}]$ which contains no other 
solution of $f^p_{ij}= \delta_{ij}$, and the function 
\[ f: B_\epsilon \times (2, \infty) \rightarrow \mathbb{R}^{n^2}, \quad f([x_j^{(i)}], t) = f^t_{ij}([x_j^{(i)}])\]
is a submersion at $([b_j^{(i)}], p)$. By the local submersion theorem, there is an open interval $(p-\delta_1, p+ \delta_2)$ 
and a differentiable function $g:(p-\delta_1, p+ \delta_2) \rightarrow \mathbb{R}^n$ such that \[ f(g(t), t)= \delta_{ij} \ \mathrm{for \ all} \ t \in (p-\delta_1, p+ \delta_2).\]

Let $J$ be the maximal domain of definition of $g$. Since the above argument holds for any $p \in (2, \infty)$, $J$ 
is open in $(2, \infty)$. By continuity, $J$ is also closed in $(2, \infty)$, hence $J=(2, \infty)$. 
This completes the proof of the theorem.
\end{proof}
\begin{remark} The above theorem does not hold true in general for Banach spaces that are not Hilbert spaces. 
For example, for $2<p<\infty$ and $\frac{1}{p}+ \frac{1}{q}=1$, the Banach space $(\mathbb{R}^2, ||.||)$ with 
the norm $||.||$ given by the $l^2_p$ norm in the first quadrant and the $l^2_q$ norm in the second quadrant of $\mathbb{R}^2$.
\end{remark}
\subsection{Auberbach bases of \texorpdfstring{$l^n_{2}$, $l^{n}_{\infty}$, and $l^{n}_{1}$}{text}} \label{infinite}
The following example shows that there are infinitely many Auerbach bases of  $l^3_{\infty}$, 
\newline
$J_{\infty}=\begin{bmatrix}
1 & 1 & 1\\
-1 & 1 & 1\\
t & 1 & -1
\end{bmatrix}$
\newline
for $-1 \leq t \leq 1$. It follows that there are infinitely many bases of $l^n_{\infty}$, $n\geq 3$. 
The same can be said about the space $l^n_{1}$ for $n \geq 3$, by \ref{james} and the duality between $l^n_{\infty}$ and $l^n_1$. 
\newline
For the Hilbert space $l^n_2$ Birkhoff-James orthogonality is the same as the standard orthogonality, 
hence there are infinitely many bases. 
\section{Auerbach bases of \texorpdfstring{$l^3_p$, $p \neq 1, 2, \infty$}{Lg}}\label{l3p}
Let $P_3$ denote solid cube which is the convex hull of the points
$(\pm 1, \pm 1, \pm 1) \in \mathbb{R}^3$. We denote 
\[\pm e_1 = (\pm 1, 0, 0), \ \pm e_2 = (0, \pm 1, 0), \ \pm e_3 = ( 0, 0, \pm 1).\] 
Let $F_{\pm e_i}$ denote the facet of the hollow cube $\partial P_3$ containing $\pm e_i$, respectively for $i= 1, 2, 3$. 
We observe that \newline
1) for $x=(x_1, x_2, x_3)$, $x_i$ and ${^p}x_i$ have the same signs and $|{^p}x_i| \leq |x_i|$ for $i = 1, 2, 3$;
\newline
2) if $x \in F_e^\mathrm{o}$ (interior of $F_e$) or $x \in \partial F_e$ (boundary of $F_e$) if and only if 
${^p}x\in F_e^\mathrm{o}$ or ${^p}x \in \partial F_e$, respectively.
\newline
The following lemma turns out to be useful in characterizing the Auerbach bases of $l^3_p$. 
\begin{lemma}For $a = (a_1, a_2, a_3) \in P_3$, the straight-line joining $a$ and
${^p}a$ either intersect one of the faces $F_{+e_i}$, $F_{-e_i}$ or the plane portion $\{ x_i = 0 \} \cap P_3$, for $i=1, 2, 3$.
\end{lemma}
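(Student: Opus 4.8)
The plan is to parametrise the \emph{whole} line through $a$ and ${^p}a$ — not merely the segment between them — and to examine it one coordinate at a time. Write $\ell(s) = (1-s)\,a + s\,{^p}a$ for $s\in\mathbb{R}$. Since ${^p}a_i = |a_i|^{p-2}a_i$, the $i$-th coordinate is the affine function $\ell_i(s) = a_i(1 - s\mu_i)$, where $\mu_i := 1 - |a_i|^{p-2}$; working with $p>2$ (the case $1<p<2$ being analogous, or reducible by the duality used in Theorem~\ref{finite}), the first observation above gives $\mu_i\in[0,1]$, with $\mu_i=0$ exactly when $|a_i|=1$ and $\mu_i=1$ exactly when $a_i=0$. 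Note that $a={^p}a$ precisely when every $a_i\in\{0,\pm1\}$, and then $a$ is either a vertex of $P_3$ or has a zero coordinate — so the interesting situation is the one in which the line is genuinely a line.

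First I would dispose of the degenerate cases directly from the formula for $\ell$. If $a$ is a vertex of $P_3$, then $a$ itself — a point of the line — lies on a facet $F_{\pm e_i}$. If $a_i=0$ for some $i$, then $\ell_i\equiv0$, so $a\in\{x_i=0\}\cap P_3$. Hence we may assume $a$ is not a vertex, so $\min_i|a_i|<1$, and we fix a coordinate $i_0$ at which $|a_{i_0}|$ is minimal.

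The main step is to show the line meets $\{x_{i_0}=0\}\cap P_3$. If $a_{i_0}=0$ we are in the previous case; otherwise $0<|a_{i_0}|<1$, so $\mu_{i_0}\in(0,1)$, and I set $s_0:=1/\mu_{i_0}>1$, which makes $\ell_{i_0}(s_0)=0$. The choice of $i_0$ is exactly what makes this work: since $t\mapsto t^{p-2}$ is increasing, minimality of $|a_{i_0}|$ forces $\mu_{i_0}=\max_i\mu_i$, so for every $j\neq i_0$ we have $s_0\mu_j=\mu_j/\mu_{i_0}\in[0,1]$, whence $1-s_0\mu_j\in[0,1]$ and $|\ell_j(s_0)| = |a_j|\,(1-s_0\mu_j)\le|a_j|\le1$. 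Therefore $\ell(s_0)$ has its $i_0$-th coordinate equal to $0$ and all other coordinates of modulus at most $1$, i.e. $\ell(s_0)\in\{x_{i_0}=0\}\cap P_3$, which is what we wanted.

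The one genuinely delicate point is this choice of $i_0$ as a minimiser of $|a_i|$: it is precisely what guarantees that the line reaches the plane $\{x_{i_0}=0\}$ at a parameter value for which it has not yet left the box in any other coordinate direction (equivalently, $\{x_{i_0}=0\}$ is the first coordinate plane hit when the line is pushed past ${^p}a$). A softer argument — that a line through an interior point of the convex body $P_3$ must cross $\partial P_3=\bigcup_i(F_{+e_i}\cup F_{-e_i})$ — would yield only the weaker ``facet'' alternative; the sharper ``coordinate-plane'' conclusion obtained above is the one that will actually be used when characterising the Auerbach bases of $l^3_p$, and the second observation above lets one transfer this conclusion back and forth between $a$ and ${^p}a$.
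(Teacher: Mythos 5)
Your computation is internally correct, but it proves a weaker statement than the lemma asserts and than the paper needs. You have read the quantifier ``for $i=1,2,3$'' existentially: you exhibit a single index, $i_0=\arg\min_i|a_i|$, for which the line meets $\{x_{i_0}=0\}\cap P_3$. The paper's own proof treats the three indices separately (``If the statement of the lemma is false for $i=1$, \dots A similar argument can be made for $i=2,3$''), and the downstream application in Assertion \ref{assert} invokes the conclusion for a \emph{prescribed} index $k$, determined by the plane $H$ rather than by the point $a$. So the intended statement is: for \emph{every} $i$, the line meets $F_{+e_i}$, $F_{-e_i}$ or $\{x_i=0\}\cap P_3$. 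Your argument delivers this for $i_0$, and implicitly for the one or two indices whose faces the line exits $P_3$ through, but it says nothing about a remaining ``middle'' coordinate. That is a genuine gap relative to the statement as written and as used.

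Moreover the gap cannot be closed, because the universal statement is false. Take $p=3$ and $a=(0.9,\,0.96,\,0.97)$, so ${}^pa=(0.81,\,0.9216,\,0.9409)$ and, in your notation, $\ell_1(s)=0.9(1-0.1s)$, $\ell_2(s)=0.96(1-0.04s)$, $\ell_3(s)=0.97(1-0.03s)$. The line lies in $P_3$ precisely for $s\in[-0.03/0.0291,\ 1.9/0.09]\approx[-1.031,\ 21.11]$, entering through $F_{+e_3}$ and leaving through $F_{-e_1}$; on that range $\ell_2$ decreases only from about $0.9996$ to about $0.149$. The unique point of the line with $x_2=1$ has $x_3\approx1.0003>1$, the point with $x_2=0$ has $x_1=-1.35$, and the point with $x_2=-1$ has $x_1\approx-3.69$, so for $i=2$ the line meets none of $F_{+e_2}$, $F_{-e_2}$, $\{x_2=0\}\cap P_3$. (The paper's proof has exactly the blind spot your approach avoids stumbling into: it works in a two--dimensional coordinate projection and verifies that the candidate intersection point has \emph{one} other coordinate in $[-1,1]$, never the third.) What survives, and what you have actually proved, is the existential version together with the sharper information that the coordinate plane hit is the one indexed by $\arg\min_i|a_i|$; if the lemma is to feed into Assertion \ref{assert} as stated, either its hypotheses must be strengthened or that argument must be rerouted through your $i_0$.
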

\begin{proof} Without loss of generality we may assume that $a_1, a_2, a_3 \geq 0$. 
Let $L$ be the projection of the straight line joining $a$ and ${^p}a$ 
on the plane $(e_1, e_2)$. The equation of $L$ is given by
\[ x_1 = a_1 + \frac{a_1 - (a_1)^p}
{a_2 - (a_2)^p} (x_2 - a_2) = a_1 + a_1
\frac{1 - (a_1)^{p-1}}{
1 - (a_2)^{p-1}}.
\frac{x_2 - a_2}
{a_2}.\]
For $a_1 \geq a_2$, we have $1 - (a_1)^p \leq 1 - (a_2)^p$, therefore, if $x_2 = 0$ on this straight line, then
$-1 \leq x_1 \leq 1$. This also means that if $x_1 = 1$ then $-1 \leq x_2 \leq 1$. Analogously, for 
$a_1 < a_2$, if $x_2 = 1$ then $-1 \leq x_1 \leq 1.$
\newline
If the statement of the lemma is false for $i=1$, then it contradicts what we have just proved. A similar argument can be made for $i=2, 3$. 
\end{proof}
In light of the above result, the following proposition holds true immediately.
\begin{Assertion}\label{assert} Let $Q$ denote an octant in $\mathbb{R}^3$. Supose $H$ be a plane such that $\{x_k =1
\} \cap Q \cap H = \emptyset $ and $\{x_k = 0\} \cap Q \cap H = \emptyset$, 
then for $i \neq k$ and for all $y \in F_{e_i} \cap Q \cap H$, ${^p}y$
lies in the same connected component of $F_{e_i} \setminus H$ as do $e_i$.
\end{Assertion}
We note that for $p > 0$, the function $x + x^p$ is monotone on $[0, 1]$. This implies
that there exist an unique $r_p \in [0, 1]$ depending on p such that
\begin{equation}1 - r_p - r_p
^p = 0.
\end{equation}
Consider the function $f(r)= (1-r)^p + r -1$ and its derivative $f^{'}(r)= r(1-r^p)$. 
Clearly $f(0)= f(1)=0$. The function $f^{'}(r)$ can be shown to be concave by looking 
further at the second derivative $f^{''}(r)$. This together with the fact that 
$f^{'}(r)$ has only two zeros in $(0, 1)$ implies that $f$ has only one zero in $(0, 1)$. 
Thus we obtain the following functional inequalities which will be crucial in the proof of Theorem \ref{mainthm}. For $1<p < \infty$, 
\begin{equation}\label{fineq1} 
(1 - x_p)^p > 1 - x \ \ \mbox{for} \ x \in [0, r_p], (1 - x_p)^p > 1 - x \ \ \mbox{for} \ \ x \in [r_p, 1], 
\end{equation}
and for 0 < p < 1, 
\begin{equation}\label{fineq2}
(1 - x_p)^p > 1 - x \ \mbox{for}\ x \in [0, 1 - r_p], (1 - x_p)^p < 1 - x \ \ \mbox{for} \ \ x \in [1 - r_p, 1].
\end{equation}
The following technical lemma will be useful in the proof of Theorem \ref{mainthm}.
\begin{lemma}\label{curvintlem} For a fixed $p>0$, let $\gamma$ be the curve defined by $\gamma(x)= (t^px
^{\frac{1}{p}} + s^p)^p$, where $t$ and $s$ are the parameters. 
\newline
1. For $t < 0$, $s > 0$ and $1<p< \infty$ (respectively $0<p< 1$), $\gamma$ is concave (respectively convex) on $[0, 1]$. 
\newline
2. For $0 < s \leq t \leq 1$ and $1<p< \infty$ (respectively $0<p< 1$), $\gamma$ intersect the line
$y=tx + s$ in the first quadrant $\{(x, y) : 0 \leq x, y \leq 1 \}$ if and only if 
$( \frac{1-s^p}{t^p} )^p \leq \frac{1-s}{t} $ (respectively $( \frac{1-s^p}
{t^p} )^p \geq \frac{1-s}{t} $). 
\end{lemma}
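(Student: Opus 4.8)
The plan is to prove the two parts in order, with Part~1 supplying the concavity/convexity data used in Part~2.

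\emph{Part 1.} I would differentiate $\gamma$ twice. Write $\gamma(x)=h(x)^{p}$ with $h(x)=t^{p}x^{1/p}+s^{p}$, which is affine in $x^{1/p}$. A routine computation in which the two terms of $\gamma''=p(p-1)h^{p-2}(h')^{2}+p\,h^{p-1}h''$ combine into one, using $h''=\tfrac{1-p}{p}\,h'/x$ and $t^{p}x^{1/p}-h=-s^{p}$, collapses to
\[
\gamma''(x)=\frac{1-p}{p}\,t^{p}s^{p}\,x^{1/p-2}\bigl(t^{p}x^{1/p}+s^{p}\bigr)^{p-2}.
\]
From here concavity/convexity is read off factor by factor: $x^{1/p-2}>0$ on $(0,1]$; $h$ is monotone in $x^{1/p}$, hence of fixed sign on the range where $\gamma$ is defined, so $h^{p-2}$ is too; and the scalar $\tfrac{1-p}{p}\,t^{p}s^{p}$ has the sign forced by the hypotheses on $t,s$ together with $\tfrac{1-p}{p}$ (negative for $1<p<\infty$, positive for $0<p<1$). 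The only work is to track these signs consistently; there is no analytic difficulty.

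\emph{Part 2: setup and the ``if'' direction.} Set $\varphi(x):=\gamma(x)-(tx+s)$ on $[0,1]$; an intersection of $\gamma$ with the line inside the unit square is a zero $x^{*}$ of $\varphi$ whose common ordinate $\gamma(x^{*})=tx^{*}+s$ also lies in $[0,1]$. Two observations frame the argument: (i) $\gamma(0)=s^{p^{2}}$, so for $1<p<\infty$ the curve starts (weakly) below the line and for $0<p<1$ above it; (ii) $\gamma$ attains ordinate $1$ exactly at $x_{\gamma}:=\bigl(\tfrac{1-s^{p}}{t^{p}}\bigr)^{p}$ and the line at $x_{L}:=\tfrac{1-s}{t}$, so the stated inequality is precisely $x_{\gamma}\le x_{L}$. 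For the ``if'' direction (say $1<p<\infty$) I would first note that $x_{\gamma}\le x_{L}$ forces $x_{\gamma}\le 1$: if $x_{\gamma}>1$ then $s^{p}+t^{p}<1$, and since $\tfrac{1-s^{p}}{1-s}\ge 1\ge t^{p-1}$ one gets $\tfrac{1-s^{p}}{t^{p}}\ge\tfrac{1-s}{t}$, whence $x_{\gamma}>\tfrac{1-s^{p}}{t^{p}}\ge x_{L}$, a contradiction. Then $\varphi(0)\le 0$ while $\varphi(x_{\gamma})=1-(tx_{\gamma}+s)\ge 1-(tx_{L}+s)=0$, so the intermediate value theorem yields a zero $x^{*}\in[0,x_{\gamma}]\subseteq[0,1]$; since $\gamma$ is increasing, the common ordinate lies between $\gamma(0)\ge 0$ and $1$, so $(x^{*},\gamma(x^{*}))$ is in the square. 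The case $0<p<1$ is identical with all inequalities reversed.

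\emph{Part 2: the ``only if'' direction, and the obstacle.} Assuming $x_{\gamma}>x_{L}$, I must show $\varphi<0$ throughout $[0,\min(1,x_{L})]$ (for $x>x_{L}$ the ordinate $tx+s$ already exceeds $1$ and leaves the square). Here Part~1 is the lever: $\gamma$, hence $\varphi$, is concave on $[0,1]$ for $1<p<\infty$ (by Part~1, or directly from the formula for $\gamma''$ with $t^{p},s^{p}>0$) with $\varphi(0)\le 0$, so the positivity set of $\varphi$ is a single subinterval, and the point is to show it misses the admissible range. The genuine difficulty is that $\gamma'(0^{+})=+\infty$, so $\varphi$ rises steeply out of $\varphi(0)$ and could a priori cross $0$; endpoint signs alone do not settle it. I expect to close this by comparing $\varphi$ with its secant/tangent line at the distinguished point $x_{\gamma}$, where $\gamma=1$ and the comparison with the line is controlled by the failed inequality $x_{\gamma}>x_{L}$, together with the functional inequalities (\ref{fineq1}), (\ref{fineq2}); equivalently, one transports the picture to the coordinates $(X,Y)=(x^{1/p},y^{1/p})$, in which $\gamma$ becomes the straight line $Y=t^{p}X+s^{p}$ and $y=tx+s$ becomes the convex curve $Y=(tX^{p}+s)^{1/p}$, and uses convexity plus the slope at $0$ and the behaviour at infinity to localise the unique possible sign change away from the box. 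Pinning that sign change precisely against the unit square is the one genuinely non-routine step; the case split on whether $x_{L}\le 1$ (i.e.\ $s+t\ge 1$) is bookkeeping, and the $0<p<1$ regime is the mirror image with $\varphi$ convex and the inequalities reversed.
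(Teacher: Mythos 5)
Your Part~1 computation of $\gamma''$ and your ``if'' direction of Part~2 are sound, and in fact more explicit than the paper's, which dismisses both as immediate. (One caution on Part~1: for $t<0$ the symbol $t^p$ only makes sense under the signed-power convention $t^p=\mathrm{sgn}(t)|t|^p$, and with that convention your factor $\frac{1-p}{p}\,t^ps^p$ is \emph{positive} for $1<p<\infty$, which would give convexity rather than the claimed concavity wherever $h>0$; so the sign bookkeeping there still needs to be pinned against the convention actually used in the application, where the sign is carried by $\mathrm{sgn}(y)$ rather than by the coefficient.)

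The genuine gap is exactly where you flag it: the ``only if'' direction of Part~2 is a plan, not a proof, and it is the only substantive content of the lemma --- the inequalities \eqref{ineq1}--\eqref{ineq3} in Theorem~\ref{mainthm} rest on precisely this direction. You correctly observe that concavity of $\varphi=\gamma-(tx+s)$ plus the endpoint signs $\varphi(0)\le 0$, $\varphi(x_\gamma)<0$ cannot exclude $\varphi>0$ in between, and your proposed remedy (straighten $\gamma$ by a change of coordinates) is in fact the paper's device: it parametrizes $\gamma$ as $(x^p,(t^px+s^p)^p)$, reduces non-intersection to the pointwise inequality $s+tx^p>(t^px+s^p)^p$ on the relevant range, and closes that by a Rolle-type slope argument --- if the convex function $x\mapsto s+tx^p$, which starts above the line $x\mapsto t^px+s^p$ at $x=0$ (since $s\ge s^p$), met it on the range in question, its derivative would have to equal $t^p$ at some $x\in[0,t)$, whereas solving $\frac{d}{dx}(s+tx^p)=t^p$ places that point at $x\ge t$; combined with $(t^px+s^p)^p<t^px+s^p$ this yields the inequality. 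Until you supply this estimate (or an equivalent one that genuinely excludes the concave bump of $\varphi$ from $[0,\min(1,x_L)]$), the lemma is not proved. When you do write it out, note that the paper's own version of this step has slips you will need to repair rather than copy: it asserts $\gamma$ is convex where your (correct) formula for $\gamma''$ gives concavity, and it differentiates $s+tx^p$ to $t(p-1)x^{p-1}$ instead of $tpx^{p-1}$.
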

\begin{proof} The proof of the first assertion can be inferred from the derivative of $\gamma$.
\newline
The "if" part of the second assertion is also easy to deduce. For the "only if" part we first observe 
that $\gamma$ is convex for $x \geq 0$. So, it is enough to show that the tangent to
$\gamma$ at $y = 1$ has slope greater than $t$. We compute the slope to be $t^p(( 1-s^p t^p )^p)^{1-p}$, and 
it is clearly greater than $t$ for $( 1-s^p
t^p )^p < t$.
\newline
For $( 1-s^p
t^p )^p > t$, we note that the coordinates of a point on the curve $\gamma$
can be expressed as $(x^p, (t^px + s^p)^p)$. In order to show that the curve does not intersect the
line $y - tx = s$, we argue that it lies entirely in one component of 
$\{[0, 1] \times [0, 1]\} \setminus \gamma $. For that it is sufficient to show that the following holds:
\begin{equation}\label{functineq}
s > (-t, 1).(x^p, (t^px + s^p)^p) \ \ \mbox{equivalently, } \ \ s + tx^p > (t^px + s^p)^p.
\end{equation}
Since by our hypothesis $s^p + t^p > 1$ which again implies $\frac{1-s}
{t} < t$, if the curve $s + tx^p$ intersect the
line $t^px+s^p$, then by the Rolle’s theorem, the slope of the curve must be equal to $t^p$ for some
$x \in [0, t)$ (see Figure \ref{curvint}). Since $\frac{d}{
tx}s + tx^p = t(p - 1)x^{p-1}$,
\[ t(p - 1)x^{p-1} = t^p \Longrightarrow x = (p - 1)^{\frac{1}{
p-1}} t \geq t \ \ (\mbox{since} \ \ p - 1 \geq 1).\]
Since $(t^px + s^p)^p < t^px + s^p$, the required inequality of Equation \ref{functineq} holds.
\begin{figure}[H]
\includegraphics[scale=0.6]{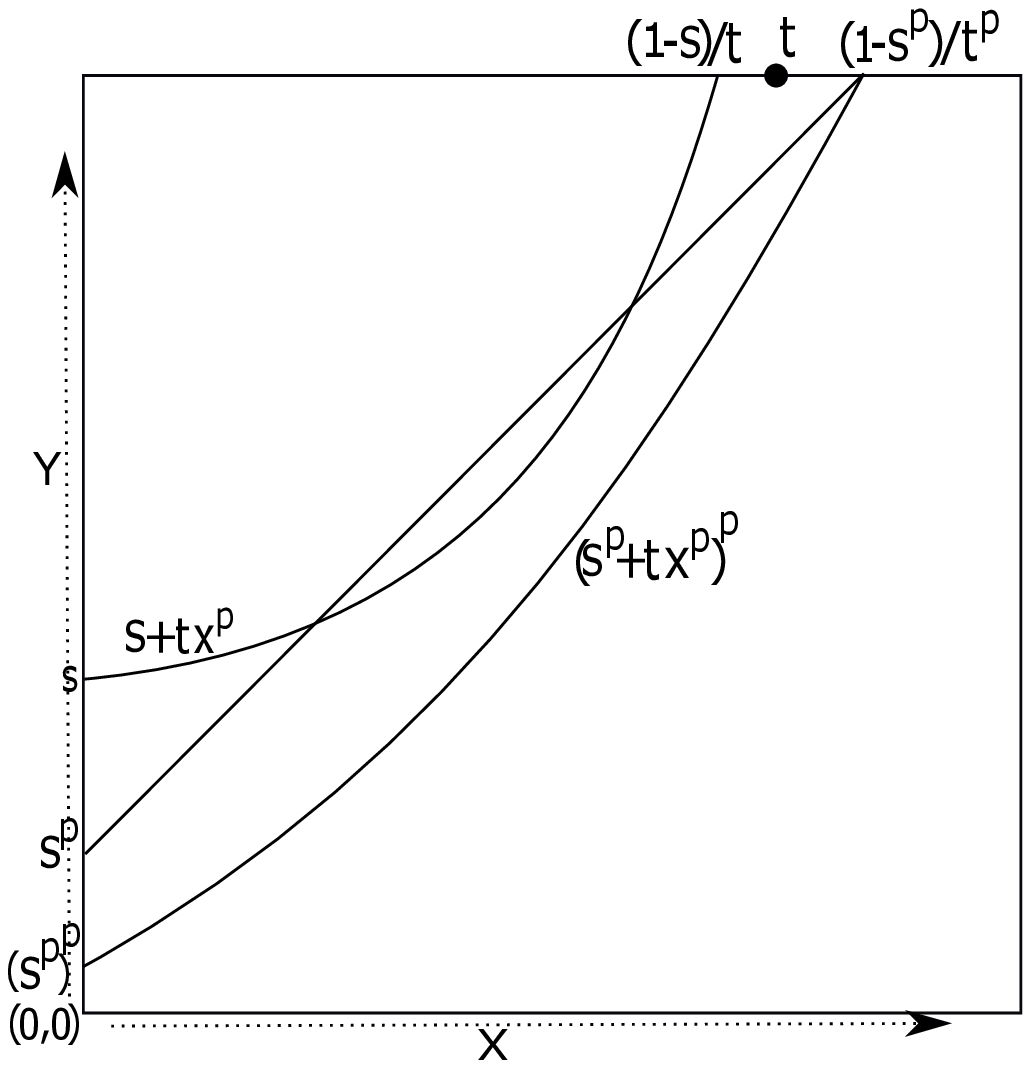}
\caption{}
\label{curvint}
\end{figure}\end{proof}

\begin{theorem}\label{mainthm} For $1<p< \infty$, $p \neq 2$, an Auerbach basis vector of $l^3_
p$ must be one of the following up to a signed permutation of the components: 
$(0, 0, 1), (0, \frac{1}{\sqrt[\leftroot{-2}\uproot{2}p]{2}}, \frac{1}{\sqrt[\leftroot{-2}\uproot{2}p]{2}})$ or
$(\frac{r_p}{\sqrt[\leftroot{-2}\uproot{2}p]{2 +r^p_p}}, \frac{1}{\sqrt[\leftroot{-2}\uproot{2}p]{2 +r^p_p}}, 
\frac{1}{\sqrt[\leftroot{-2}\uproot{2}p]{2 +r^p_p}})$. 
\end{theorem}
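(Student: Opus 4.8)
The plan is to classify, up to positive scaling, the vectors $v$ that can occur in some Auerbach basis $\{v, v_2, v_3\}$ of $l^3_p$, by a case analysis on the shape of $v$ that reduces each case to the line-versus-curve comparison of Lemma \ref{curvintlem} together with the functional inequalities \ref{fineq1}. Since the $l^p$-norm and hence Birkhoff-James orthogonality are invariant under signed permutations of the coordinates, and the list of vectors in the statement is itself closed under such permutations, it suffices to prove: if $\{v,v_2,v_3\}$ is an Auerbach basis of $l^3_p$ and $v$ has been brought, by a signed permutation, to the form $v=(a_1,a_2,a_3)$ with $a_1\ge a_2\ge a_3\ge 0$, then $v$ is a scalar multiple of one of the three listed vectors. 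As Birkhoff-James orthogonality is scale invariant, I rescale each of $v,v_2,v_3$ to lie on $\partial P_3$ (largest coordinate in absolute value equal to $1$); writing $V$ for the matrix with rows $v,v_2,v_3$ and $V^{[p]}$ for the matrix obtained by applying $x\mapsto {}^{p}x$ entrywise, the Auerbach condition is then exactly ${}^{p}v_i\cdot v_j=0$ for $i\ne j$ (the diagonal entries are automatically nonzero). Equivalently, $v_2,v_3$ span the Euclidean plane $H:=({}^{p}v)^{\perp}$, and in addition ${}^{p}v_2\cdot v={}^{p}v_3\cdot v=0$ and ${}^{p}v_2\cdot v_3={}^{p}v_3\cdot v_2=0$.

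I would first dispose of the degenerate shapes. If $v$ has two zero coordinates then $v\sim(0,0,1)$, on the list. If $v$ has exactly one zero coordinate, say $v=(a_1,a_2,0)$ with $a_1,a_2>0$, then ${}^{p}v=(a_1^{p-1},a_2^{p-1},0)$ and $H=\operatorname{span}\{e_3,\ (a_2^{p-1},-a_1^{p-1},0)\}$; since $v$ and $e_3$ already span a plane, at least one partner, say $v_2$, equals $\beta(a_2^{p-1},-a_1^{p-1},0)+\gamma e_3$ with $\beta\ne 0$. Imposing ${}^{p}v_2\cdot v=0$ and cancelling positive factors collapses to $a_1^{\,p(p-2)}=a_2^{\,p(p-2)}$, and since $p\ne 0,2$ the exponent is nonzero, so $a_1=a_2$ and $v\sim(0,2^{-1/p},2^{-1/p})$, again on the list.

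The substance of the proof is the case where all three coordinates of $v$ are positive. Then ${}^{p}v$ has all coordinates positive, so $H=({}^{p}v)^{\perp}$ meets the closed first and the closed negative octants only at the origin; hence each partner, in its $\partial P_3$-representative, lies in some facet $F_{\pm e_k}$ and has mixed signs. Negating a partner and swapping $v_2\leftrightarrow v_3$ both preserve the Auerbach condition without disturbing $v$, so I may normalize $v_2\in F_{e_j}$, $v_3\in F_{e_k}$ and run through the finitely many resulting configurations. Fixing one and parametrizing a partner in its facet by its two non-distinguished coordinates $(s,t)\in[-1,1]^2$, the requirement ``partner $\perp{}^{p}v$'' is an explicit affine line $\ell$ in the $(s,t)$-square and ``${}^{p}(\text{partner})\perp v$'' is an explicit convex curve $C$ of exactly the type $y=(\text{linear in }x^{1/p})^{p}$ treated in Lemma \ref{curvintlem} (after the coordinate change that absorbs the fixed signs); a partner lying in this facet is precisely a point of $\ell\cap C$ in the square. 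Lemma \ref{curvintlem}, together with \ref{fineq1}, turns the existence of the required partners into a sharp inequality in $a_1,a_2,a_3$ whose borderline case is literally $1-r_p-r_p^{\,p}=0$.

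For the endgame I expect the following. For most configurations the constraint on $(a_1,a_2,a_3)$ is incompatible with the remaining cross-conditions ${}^{p}v_2\cdot v_3={}^{p}v_3\cdot v_2=0$ — equivalently, with the same dichotomy applied to $v_2$ and to $v_3$ in their own octant frames — and the configuration is eliminated. The single surviving configuration forces, via Lemma \ref{curvintlem}, two of the $a_i$ to be equal with the third equal to $r_p$ times their common value; since $a_1\ge a_2\ge a_3>0$ this must read $a_1=a_2>a_3=r_p a_1$, i.e. $v\sim(r_p,1,1)$, the last vector on the list. The main obstacle is the bookkeeping rather than the analysis: one must keep track of which line and which curve sit in which facet and — crucially — which connected component of (facet $\setminus H$) each point ${}^{p}(\text{partner})$ is forced into, for which the connectivity statement of Assertion \ref{assert} is exactly the right tool; once a configuration is pinned down, the contradiction, or the value $r_p$, drops out of a single Rolle-type comparison as in the proof of Lemma \ref{curvintlem}.
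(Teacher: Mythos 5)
Your overall strategy is the same as the paper's: normalize the candidate vector by a signed permutation, rescale the basis to $\partial P_3$, observe that the partners of a vector with all coordinates positive must lie in facets with mixed signs, and then convert each ``partner in facet $F$'' condition into an intersection of an affine line ($\perp_{{}^p u}$) with a curve of the form $y=(t^p x^{1/p}+s^p)^p$ (${}^p(\cdot)\perp u$), which is exactly what Lemma \ref{curvintlem}, the inequalities \ref{fineq1} and Assertion \ref{assert} are built for. Your treatment of the degenerate shapes is correct and in fact more explicit than the paper's (which simply says the case $a=0$ reduces to $l^2_p$): the computation $a_1a_2^{(p-1)^2}=a_2a_1^{(p-1)^2}\Rightarrow a_1^{p(p-2)}=a_2^{p(p-2)}\Rightarrow a_1=a_2$ for $p\neq 0,2$ is a clean way to recover $(0,2^{-1/p},2^{-1/p})$.

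The gap is that the decisive step is announced rather than executed. Everything from ``run through the finitely many resulting configurations'' onward is prefaced by ``I expect the following,'' and that enumeration is where essentially all of the content of the theorem lives. The paper does this work concretely: it splits on whether $\perp_u$ and $\perp_{{}^pu}$ meet the top facet (the trichotomy $a+b\le 1$; $a+b>1$ but $a^p+b^p\le 1$; $a^p+b^p>1$), eliminates the first two cases by showing both partners would be confined to a pair of adjacent regions ($R_1$ and $R_4$) in which they cannot be mutually Birkhoff orthogonal, and in the third case derives three explicit inequalities (\ref{ineq1}, \ref{ineq2}, \ref{ineq3}) from Lemma \ref{curvintlem} whose simultaneous validity, via \ref{fineq1}, forces the borderline values $a=r_p$, $b=d=1$, $c=r_p$. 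Your proposal gives no argument that only one configuration survives, nor that the surviving one pins down the coordinates to $(r_p,1,1)$ rather than merely constraining them to an interval; without deriving at least the analogue of the opposing inequalities \ref{ineq1} and \ref{ineq3} (whose only common solution is the equality case $k=r_p$), the conclusion does not follow. The tools and the predicted answer are right, but as written the proof stops exactly where the paper's Case 1--Case 3 analysis begins.
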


\begin{proof} We first consider the case for $2<p< \infty$. It is easy to see that $(0, 0, 1)$ and 
$ (0, \frac{1}{\sqrt[\leftroot{-2}\uproot{2}p]{2}}, \frac{1}{\sqrt[\leftroot{-2}\uproot{2}p]{2}})$ are Auerbach basis vectors.
\newline 
For the other possibilities, we look for three vectors in $\partial P_3$ which can be rescaled to an Auerbach bases of $l^3_p$. 
We divide $P_3$ into the following regions, see Figure \ref{regions}.
\begin{figure}[H] 
\begin{minipage}{.45\textwidth} 
\includegraphics[scale=0.65]{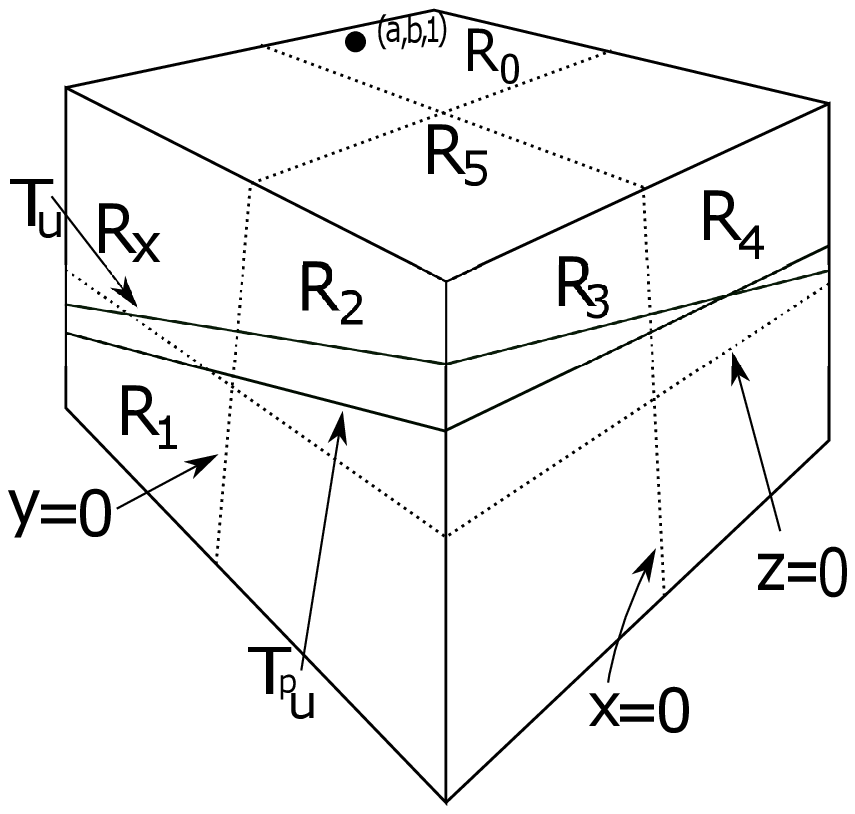}
\captionof{figure}{Case 1}
 \label{regions}
\end{minipage}
\begin{minipage}{.45\textwidth} 
\includegraphics[scale=0.65]{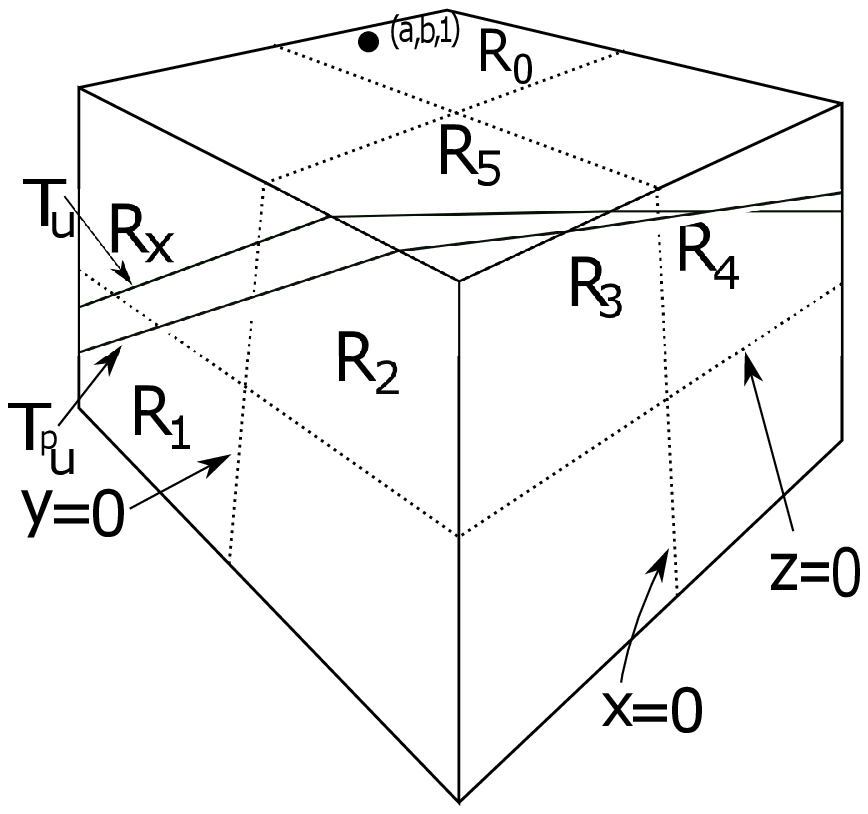}
\captionof{figure}{Case 2}
\label{case2} 
\end{minipage}
\end{figure}
\begin{multline} \label{quads}
R_0 := \{(x, y, 1)| \ 0 \leq x \leq 1, 0 \leq y \leq 1 \}; \ \ R_1 := \{(-1, y, z)| \ 0 \leq y \leq 1, -1 \leq z \leq 0 \} \\
R_2 := \{(-1, y, z)| \ - 1 \leq y \leq 0, 0 \leq z \leq 1 \}; \ \ R_3 := f(x, -1, z)| - 1 \leq x \leq 0, 0 \leq z \leq 1 \}\\
R_4 := \{ (x, -1, z)| \ 0 \leq x \leq 1, 0 \leq z \leq 1 \}; \ \ R_5 := \{ (x, y, 1)| - 1 \leq x \leq 0, -1 \leq y \leq 0, \}\\ 
R_{\times} := \{ (-1, y, z)| \ 0 \leq y \leq 1, 0 \leq z \leq 1 \}.
\end{multline}
Further, for a region $R$, $-R$ denote the region 
consisting of all antipodal points of $R$. 
\newline
Let $\perp_x$ denote the plane perpendicular to the vector $x$. By Equation \ref{semidef}, 
$x, y$ are mutually orthogonal in the sense of Birkhoff-James iff
\begin{equation}\label{criteria} y \in \perp_{^px}, \ \ \mathrm{and} \ \ {^p}y \in \perp_{x}.
\end{equation}
Let $\{u, v, w \}$ form an Auerbach basis of $l^3_p$. We may choose 
$u = ( a, b, 1)$, $0 \leq a \leq b \leq 1$ up to equivalence. Note that $a=0$ reduces the problem to 
$l^2_p$, so as shown in \S\ref{number}, $u=(0, 0, 1)$ or $(0, \frac{1}{\sqrt[\leftroot{-2}\uproot{2}p]{2}}, 
\frac{1}{\sqrt[\leftroot{-2}\uproot{2}p]{2}})$. So we may further choose $a>0$. 
\newline
\textbf{Case 1:} $a + b \leq 1$: In this case, both the planes $\perp_u$ and $\perp_{^pu}$ do not 
intersect the face $ F_{\pm e_3}$. The equation of the straight lines 
$\perp_u \cap \{x= -1 \}$ and $\perp_{^pu} \cap \{x = -1 \}$ are given by $z = by + a$ and 
$z = b|b|^{p-2}y + a|a|^{p-2}=b^{p-1}y + a^{p-1}$, respectively. So, the intersection 
$\{^p\zeta|\zeta \in \perp_{^pu} \} \cap \{x = -1\} $ is given by the curve 
$\gamma: y \rightarrow \big(b^{p-1} \mathrm{sgn}(y)|y|^{\frac{1}{p-1}} + a^{p-1}\big)^{p-1}$. 
Then it follows from Lemma \ref{curvintlem} and Assertion \ref{assert} that $v$ and $w$ can possibly 
lie only in the regions $R_1$ and $R_4$. Since the region $- R_1$ and $R_4$ are adjacen, 
the plane $\perp_{{^p}v}$ does not intersect $R_4$  for $v \in R_1$. Consequently, $v$ and $w$ cannot be mutually Birkhoff orthogonal.
\newline
\begin{figure}[H] 
\begin{minipage}{.45\textwidth} 
\includegraphics[scale=0.56]{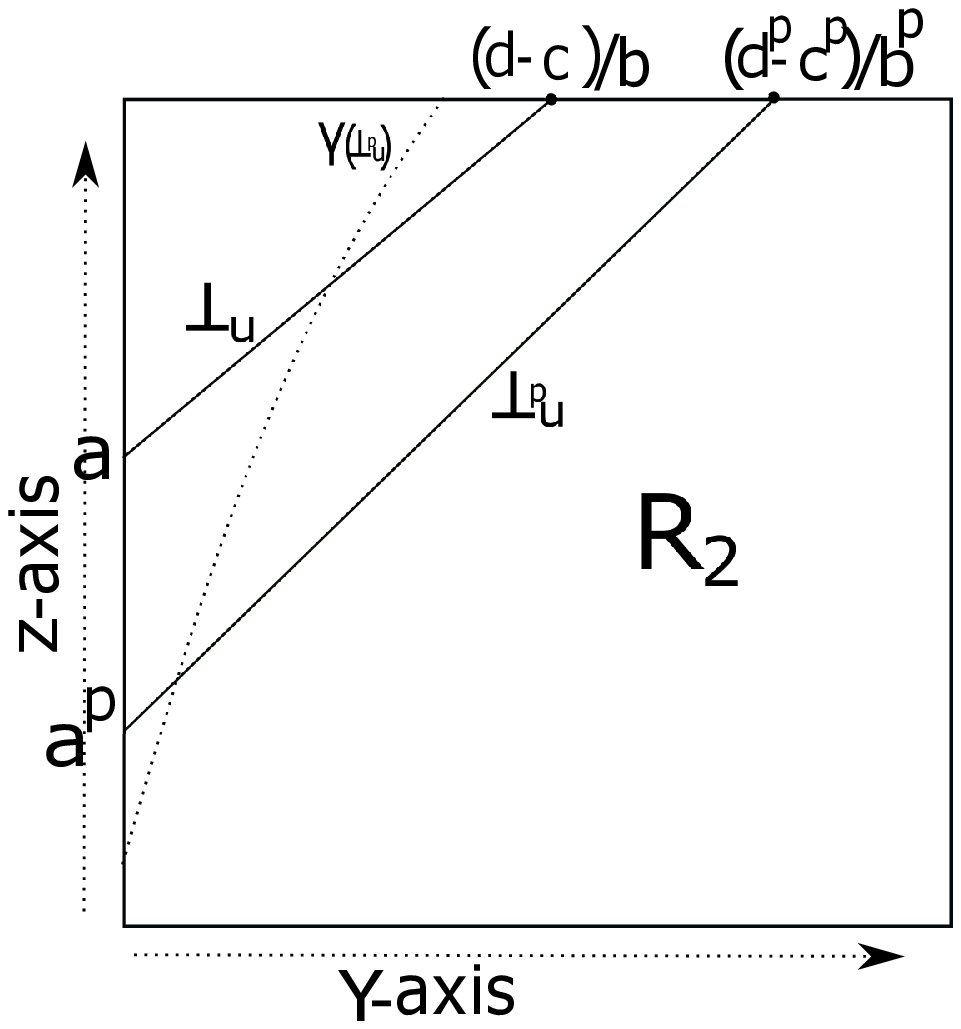}
\captionof{figure}{Region $R_2$}
\label{case1} 
\end{minipage}
\begin{minipage}{.45\textwidth} 
\includegraphics[scale=0.5]{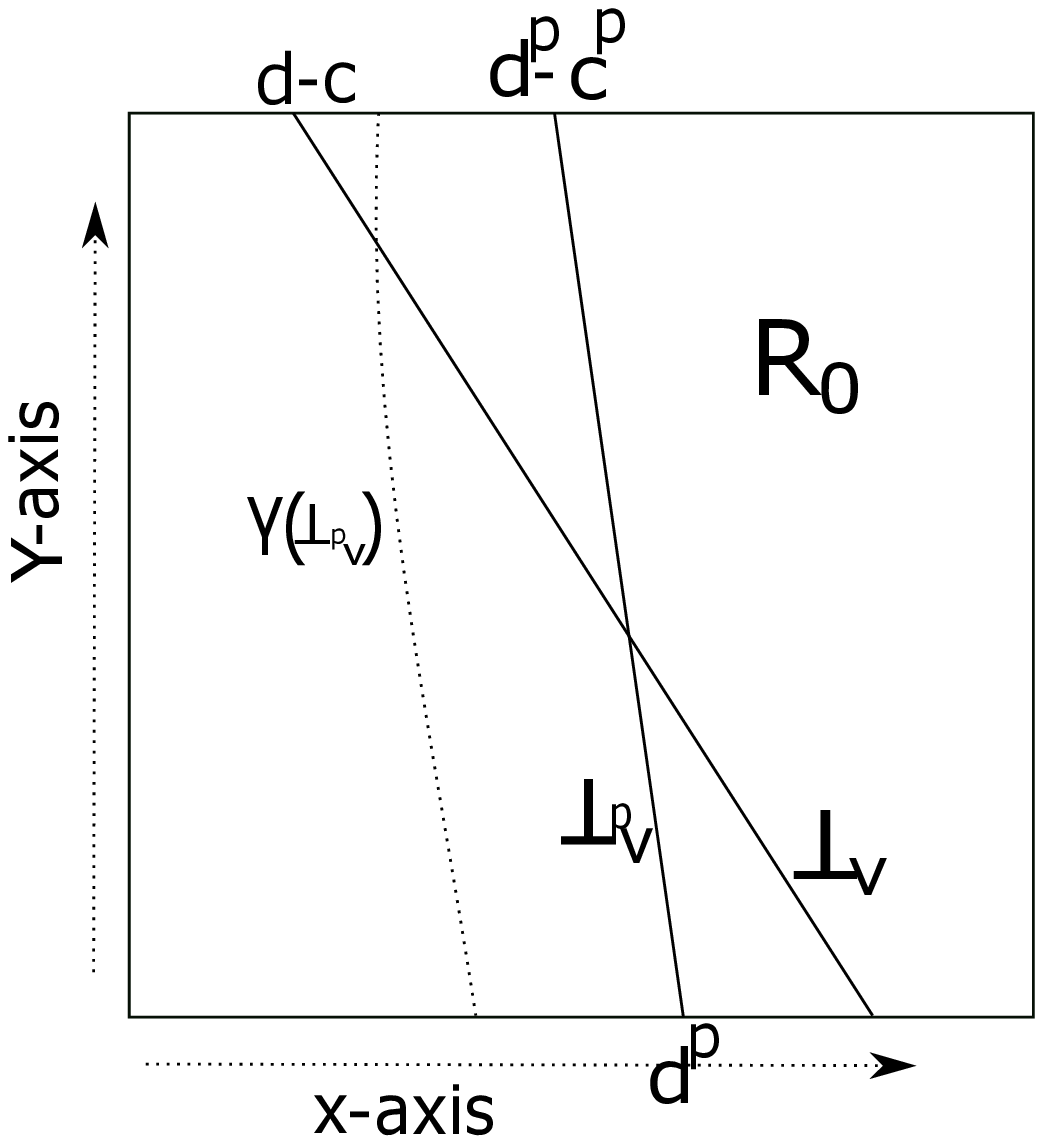}
\captionof{figure}{Region $R_0$}
\label{face2} 
\end{minipage}
\end{figure}
\textbf{Case 2:} $a + b > 1$ but $a^p + b^p \leq 1: $ The plane $\perp_{{^p}u}$ does not intersect the
interior of the facet $z = 1$ in this case as well. Thus we arrive at the same conclusion as in Case 1.
\newline
\textbf{Case 3:} $a^p + b^p > 1$. In this case, both the planes $\perp_u$ and $\perp_{{^p}u}$ intersect
the interior of the facet $z = 1$. Suppose $v$ lies in the $R_2$ region with $v = (-1, -c, d)$.
By Lemma \ref{curvintlem}, if \ref{criteria} holds for $x = u$ and $y = v$ then following inequality must be satisfied (see figure \ref{case1})
\begin{equation}\label{ineq1} \left( \frac{1 - a^p}{b^p} \right ) \leq \frac{1 - a}
{b} \implies (1 - a^p)^p \leq b^{p^2-1}(1 - a)
\end{equation}
Since $b \leq 1$, so by \ref{fineq1} we must have $a \geq r_p$ and for equality, i.e., for $a = r_p$, we must have $b=1$. 
\newline
On the other hand, if \ref{criteria} holds for $x = v$ and $y = u$, then the following inequality must be satisfied (see Fig. \ref{face2})
\begin{equation}\label{ineq2}
(d^p - c^p)^p \geq d - c \implies d^{p^2-1}(1 - l^p)^p \geq 1 - l \ \mathrm{with} \ \ l = c/d
\end{equation}
By inequality \ref{fineq1}, $l > r_p$ and for equality, i.e., for $l = r_p$, we must have d = 1.
If $u$ and $v$ lies in the $R_0$ and $R_2$, respectively, then it is easy to see that $w$ can possibly
lie only in the regions $R_1$ or $R_4$.
Suppose $w$ lies in the region $R_4$, then by Lemma \ref{functineq}, the following must hold, see figure \ref{figr3}.
\begin{equation}\label{ineq3}
(b^p - a^p)^p \geq b - a \implies b^{p^2-1}(1 - k^p)^p \geq 1 - k \ \ \mathrm{with} \ \ k = a/b.
\end{equation}
Therefore, by inequality \ref{fineq1}, we have $k \leq r_p$. However, this contradicts \ref{ineq1} unless $k =r_p$, 
in which case $a =r_p, b = k = 1$. This also implies $w = (r_p, -1, 1)$.
On the other hand, if $w$ lies in $R_1$ region, then ${^p}w \in \perp_u \cap \perp_v$. 
But $\perp_u \cap \perp_v= \emptyset$, unless the following holds (see figure \ref{figr4})
\[\frac{1 - c} {d} \leq b - a.\]
Therefore, $c \geq a$, which implies $l =\frac{c}{d} \geq \frac{a} {d} \geq r_p$. Since $l \leq r_p$ 
by \ref{ineq2}, then we must have
$l =r_p$. This means $c = a =r_p$ and $d = 1, b = 1$.
\begin{figure}[H] 
\begin{minipage}{.45\textwidth} 
\includegraphics[scale=0.5]{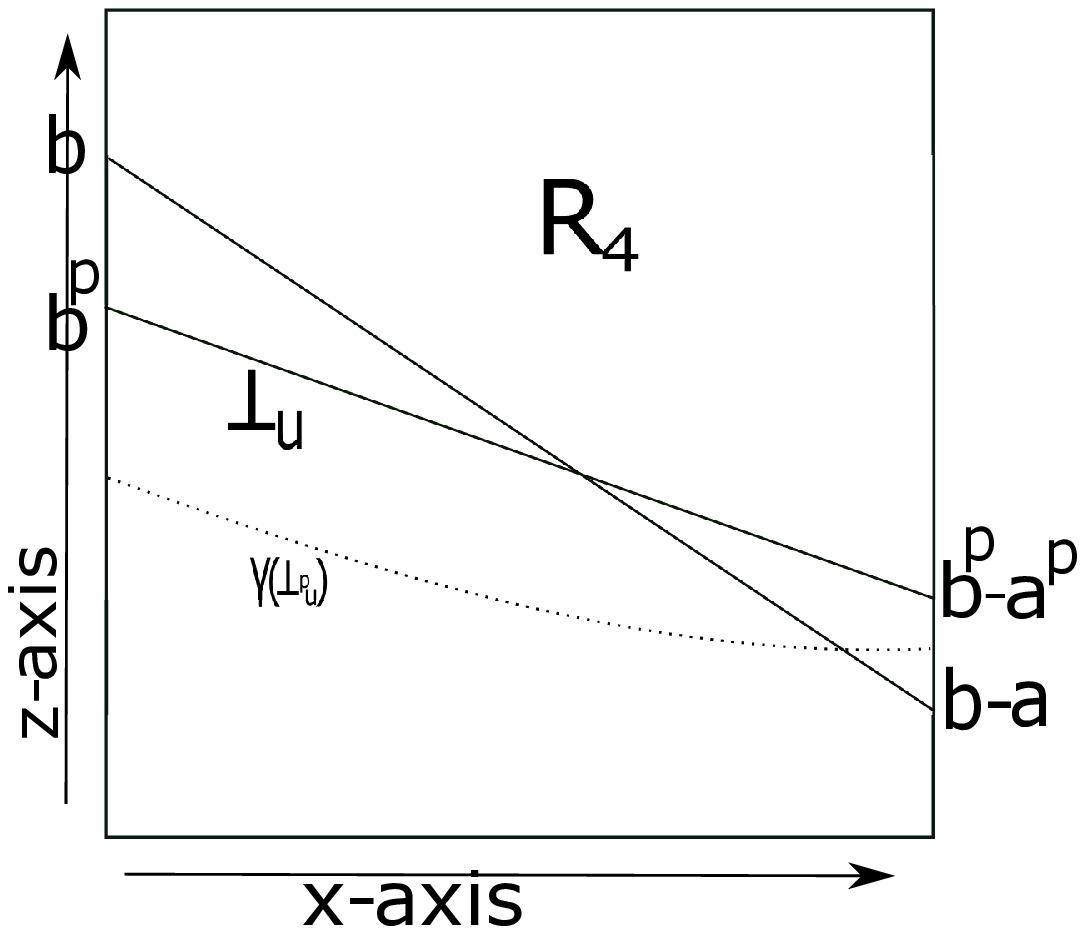}
\captionof{figure}{Facet $R_4$}
\label{figr3} 
\end{minipage}
\begin{minipage}{.45\textwidth} 
\includegraphics[scale=0.5]{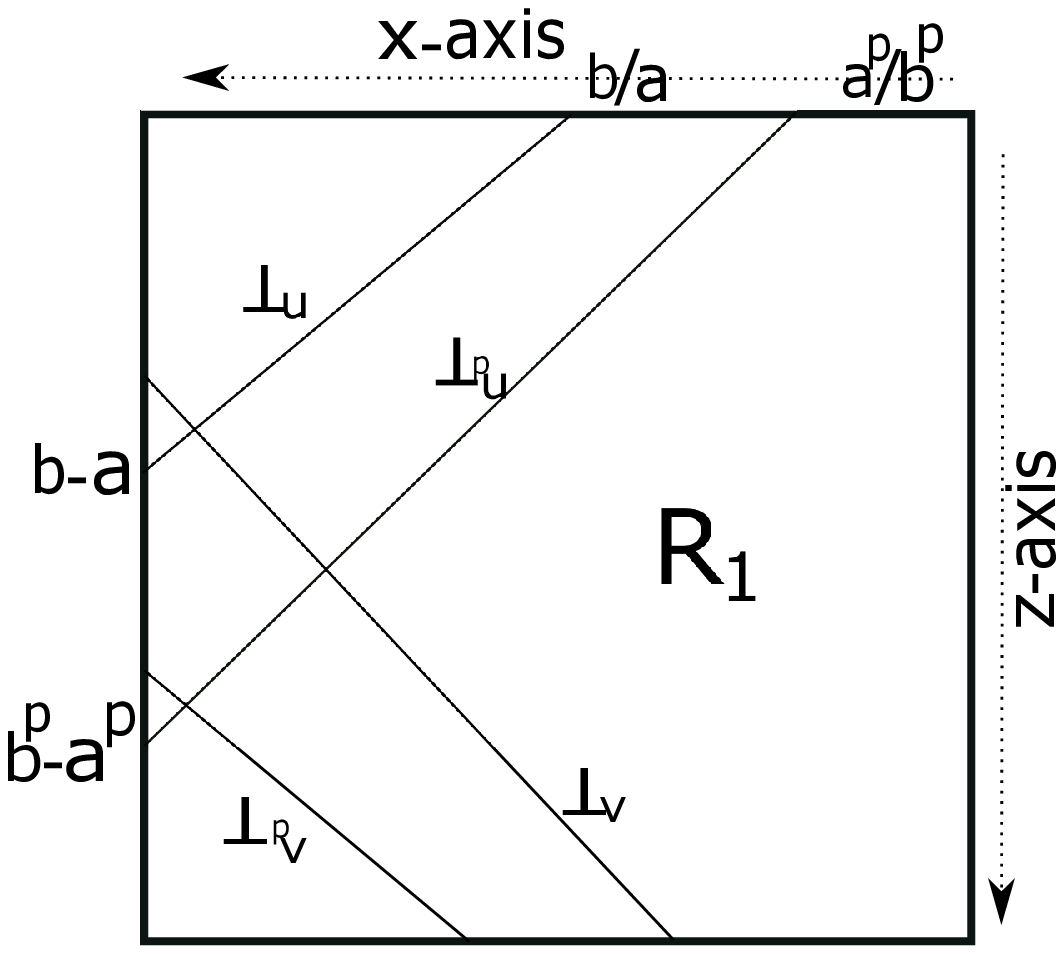}
\captionof{figure}{Facet $R_1$}
\label{figr4} 
\end{minipage}
\end{figure}
The remaining possible positions of $v$ and $w$ can be shown to be equivalent to one of
the two subcases of case 3 discussed above.
\newline
The proof for the case $1<p<2$ is very similar to the above, we leave the argument to the reader.
\end{proof}
\subsection{Bases of \texorpdfstring{$l^3_p$}{Lg}}
By the above theorem, up to equivalence, the following the following is the complete list of bases of $l^3_p$, for $1<p<\infty$, $p \neq 2$. 
\newline
$I_3$, 
$\begin{bmatrix}
1 & 0 & 0 \\
0 & \frac{1}{\sqrt[\leftroot{-2}\uproot{2}p]{2}} & \frac{1}{\sqrt[\leftroot{-2}\uproot{2}p]{2}} \\
0 & \frac{1}{\sqrt[\leftroot{-2}\uproot{2}p]{2}} & -\frac{1}{\sqrt[\leftroot{-2}\uproot{2}p]{2}} 
\end{bmatrix}$, 
$J_p=\begin{bmatrix}
 \frac{1}{\sqrt[\leftroot{-2}\uproot{2}p]{2 +r^p_p}} & \frac{1}{\sqrt[\leftroot{-2}\uproot{2}p]{2 +r^p_p}} & \frac{-r_p}{\sqrt[\leftroot{-2}\uproot{2}p]{2 +r^p_p}} \\
 \frac{1}{\sqrt[\leftroot{-2}\uproot{2}p]{2 +r^p_p}} & \frac{-r_p}{\sqrt[\leftroot{-2}\uproot{2}p]{2 +r^p_p}} & \frac{1}{\sqrt[\leftroot{-2}\uproot{2}p]{2 +r^p_p}} \\
\frac{-r_p}{\sqrt[\leftroot{-2}\uproot{2}p]{2 +r^p_p}} & \frac{1}{\sqrt[\leftroot{-2}\uproot{2}p]{2 +r^p_p}} & \frac{1}{\sqrt[\leftroot{-2}\uproot{2}p]{2 +r^p_p}}
\end{bmatrix}$
\newline
The bases of $l^3_2$ are obviously orthogonal the matrices. It is not difficult to verify using the 
definition that up to equivalence, the following three types of bases are the only bases of 
$l^3_{\infty}$ up to equivalence: $I_3, J_{\infty}$ (defined in \S \ref{infinite}) and \newline
$\begin{bmatrix}
1 & 0 & 0 \\
0 & 1 & 1 \\
0 & 1 & -1 
\end{bmatrix}$.
\newline
The bases of $l^3_{1}$ is again characterized by the duality between $l^3_{1}$ and $l^3_{\infty}$. 
This completes the description of the bases of $l^3_p$ spaces for all possible $p$. 
\begin{remark}Using the above non-stationary basis $J_p$ of $l^3_p$, $1<p<\infty$, $p \neq 2$, and 
the Sylvester construction \ref{syl}, we can produce examples of non-stationary Auerbach 
bases of $l^n_p$ with $n=3.2^k$, $k=1, 2, \cdots$.
\end{remark}
\section{Strong Auerbach bases}\label{sec4}
Two vectors $ (a_1, a_2, \cdots, a_n)$ and $(b_1, b_2, \cdots, b_n)$ are said to have \textit{mutually disjoint support}
if $a_ib_i = 0$ for all $i = 1, \cdots, n.$ The following result should be well known though
the authors were not able to trace a reference.

\begin{proposition}\label{lpsubspace} 
For $1 < p < 2$ or $2 < p < \infty$, a subspace of $l^n_
p$ spanned by m vectors $(v_1, v_2, \cdots, v_m)$
with $ m \leq n$ is isometrically isomorphic to the Banach space $l^m_p$ if and only if $v_i$'s have
mutually disjoint support.
\end{proposition}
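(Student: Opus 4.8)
The plan is to prove the two implications separately, with essentially all the work in the "only if" direction; for definiteness I read the assertion in the form that a subspace $W\subseteq l^n_p$ is isometrically isomorphic to $l^m_p$ precisely when $W$ has a basis of vectors with pairwise disjoint support.

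The "if" direction is routine. If $v_1,\dots,v_m$ are nonzero with pairwise disjoint support, then at each coordinate $k$ at most one $v_i$ is nonzero, so $\big\|\sum_i c_i v_i\big\|_p^p=\sum_i|c_i|^p\|v_i\|_p^p$ for all scalars; hence $a\mapsto\sum_i(a_i/\|v_i\|_p)v_i$ is a linear isometry of $l^m_p$ onto $W=\mathrm{span}\{v_1,\dots,v_m\}$.

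For the "only if" direction, fix a linear isometry $T\colon l^m_p\to l^n_p$ with range $W$ and set $w_i=Te_i$, so $w_1,\dots,w_m$ form a basis of $W$; I will show these vectors have pairwise disjoint support. Fixing $i\neq j$, say $i=1,j=2$, and restricting $T$ to the coordinate plane $\mathrm{span}\{e_1,e_2\}$, which carries exactly the $l^2_p$ norm, reduces matters to the two-dimensional statement: if $\|w_1\|_p=\|w_2\|_p=1$ and $\sum_{k=1}^n|s\,w_{1,k}+t\,w_{2,k}|^p=|s|^p+|t|^p$ for all $s,t\in\mathbb{R}$, then $\mathrm{supp}(w_1)\cap\mathrm{supp}(w_2)=\emptyset$. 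The argument here splits according to the smoothness of $x\mapsto|x|^p$ at the origin. For $2<p<\infty$ this function is $C^2$, hence so is $\Phi(s,t)=\sum_k|s\,w_{1,k}+t\,w_{2,k}|^p$; differentiating the identity twice in $t$ and evaluating at $(s,t)=(1,0)$ gives on the left $p(p-1)\sum_k|w_{1,k}|^{p-2}w_{2,k}^2$ and on the right $p(p-1)\,|0|^{p-2}=0$ because $p-2>0$, so every (nonnegative) summand vanishes and $w_{1,k}=0$ or $w_{2,k}=0$ for each $k$, which is the claim. For $1<p<2$ I would instead put $s=1$, write $A=\mathrm{supp}(w_1)$, and separate the non-smooth part: for $k\notin A$ the term is $|t|^p|w_{2,k}|^p$, while $\psi_A(t):=\sum_{k\in A}|w_{1,k}+t\,w_{2,k}|^p$ is $C^\infty$ near $t=0$ since each argument is bounded away from $0$ there; thus $|t|^p\big(1-\sum_{k\notin A}|w_{2,k}|^p\big)=\psi_A(t)-1$ is smooth near $0$, and since $|t|^p$ is not twice differentiable at $0$ for $1<p<2$ the coefficient must vanish, giving $\sum_{k\notin A}|w_{2,k}|^p=1=\|w_2\|_p^p$ and hence $w_{2,k}=0$ for all $k\in A$. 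Running this over all pairs yields pairwise disjoint supports, and the proof is complete.

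The only genuinely delicate point is the split above: for $p>2$ disjointness is extracted from a second-order Taylor coefficient of the norm identity, while for $1<p<2$ the second derivative of $|x|^p$ blows up at $0$ and one must instead argue that the non-$C^2$ contributions cancel. I note in passing that $p\neq 2$ is essential, since in $l^n_2$ every orthonormal $m$-tuple spans a subspace isometric to $l^m_2$ irrespective of the supports.
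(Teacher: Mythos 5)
Your proof is correct, but the mechanism you use in the ``only if'' direction is genuinely different from the paper's. Both arguments make the same initial reduction to the two-dimensional statement that an isometric copy of $l^2_p$ inside $l^n_p$ must admit a spanning pair with disjoint supports (and your reading of the statement is the right one, since the given spanning vectors need not themselves be disjointly supported). From there the paper argues geometrically: it calls a unit vector $x$ \emph{spherical} when $x$ is parallel to the normal ${}^{p}x$, observes that the spherical points of $l^2_p$ are exactly $\pm e_1,\pm e_2,\pm 2^{-1/p}(e_1\pm e_2)$ while a spherical point of $l^n_p$ has all its nonzero coordinates of equal modulus, and then deduces disjointness from the fact that $\nu_1$, $\nu_2$ and $\nu_1+\nu_2$ must all be (multiples of) spherical points. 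You instead differentiate the norm identity $\sum_k|sw_{1,k}+tw_{2,k}|^p=|s|^p+|t|^p$: for $p>2$ the vanishing of the second $t$-derivative at $(1,0)$ kills $\sum_k|w_{1,k}|^{p-2}w_{2,k}^2$ term by term, and for $1<p<2$ the failure of $|t|^p$ to be twice differentiable at $0$ forces the off-support coordinates of $w_2$ to carry its entire $p$-norm. Your route is more computational but also more self-contained: the paper's step that the isometry transports sphericalness to the ambient space (i.e.\ that $\nu_1+\nu_2$ is spherical with respect to the ambient normal ${}^{p}x$) is asserted rather than justified, whereas every step of your regularity argument is elementary and explicit; the price is the case split at $p=2$, which the geometric argument handles uniformly.
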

\begin{proof} 
Let $V= \mathrm{span}\{v_1, v_2, \cdots, v_m\}$, where $v_i$ are vectors with mutually disjoint support. 
Let $f: l^m_p \rightarrow V $ be the homomorphism defined by extending the following assignment linearly
\[ e_i \rightarrow \frac{1}{\| v_i \|_p } v_i .\]
This produces an isometry. 
\newline
For the converse it is sufficient to show that if $h: l^2_p \rightarrow V_2= \mathrm{span} (w_1, w_2)$ 
is an isometry with $w_1, w_2 \in \ \mathbb{R}^n$, then $\mathrm{span} (w_1, w_2)$ must be spanned by 
two vectors with disjoint support. Suppose we have an isometry defined by extending the following assignment linearly
\[ e_1 \rightarrow \nu_1 \ \ \mathrm{and} \ \ e_2 \rightarrow \nu_2, \]
where $\nu_1, \nu_2 \in \mathrm{span} (w_1, w_2)$.
\newline
Let us call a point $x$ on the unit sphere of $l^n_p$ spherical if $x$ and the normal vector on the 
unit sphere at $x$ are linearly dependent. It is not difficult to see that a spherical point must be 
one of the eight points: $\pm e_1, \pm e_2, \pm (e_1 \pm e_2)$.
\newline
Recall that the normal vector at a point $x= (x_1, x_2, \cdots, x_n)$ on the unit sphere in $l^n_p$ 
is given by ${^p}x = (x_1|x_1|^{p-2}, x_2|x_2|^{p-2}, \cdots, x_n|x_n|^{p-2})$. So, $x= (x_1, x_2, \cdots, x_n)$ 
is a spherical point if and only if either $x_i=0$ or $|x_i|=c$ for some constant $c$ for $i=1, 2, \cdots, n$. 
Since $h(e_1 +e_2)= v_1 + v_2$, $v_1 + v_2$ is also a spherical point. $v_1$, $v_2$ and $v_1 + v_2$ 
are all spherical point, implies $v_1$ and $v_2$ have mutually disjoint support. 
\end{proof}
 In light of the above proposition and Theorem \ref{mainthm}, it is now easy to deduce the following about strong Auerbach bases (defined in the introduction) of $l^n_p$. 
\begin{theorem} A strong Auerbach basis vector of $l^n_p$ with $1< p < \infty$ must be of the form $(1, 0, \cdots, 0)$ 
or $(\frac{1}{\sqrt[\leftroot{-2}\uproot{2}p]{2}}, \frac{1}{\sqrt[\leftroot{-2}\uproot{2}p]{2}}, 0, \cdots, 0)$ 
up to a signed permutation of the components. 
\end{theorem}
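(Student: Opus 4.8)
The plan is to reduce the $n$-dimensional statement to the three-dimensional computation already carried out in Theorem~\ref{mainthm}, using Proposition~\ref{lpsubspace} to control the supports. Let $\mathcal{B}=\{v_1,\dots,v_n\}$ be a strong Auerbach basis of $l^n_p$ with $1<p<\infty$; the case $p=2$ is trivial since every orthonormal basis is strong, and the claimed vectors $(1,0,\dots,0)$ and $\tfrac{1}{\sqrt[p]{2}}(1,1,0,\dots,0)$ are among those. So assume $p\neq 2$. Fix a basis vector, say $v_1=(a_1,\dots,a_n)$. The first step is to show that $v_1$ has at most two nonzero coordinates. Indeed, pick any three coordinate indices $i<j<k$ on which $v_1$ is nonzero (if they exist); the strong hypothesis says the span of \emph{any} $m$ of the $v_\ell$'s is isometrically $l^m_p$, and in particular — applying Proposition~\ref{lpsubspace} to $m\le n$ — the supports of the $v_\ell$'s are pairwise disjoint. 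Once supports are pairwise disjoint, each $v_\ell$ has support contained in a block, the blocks partition $\{1,\dots,n\}$, and since there are $n$ vectors and $n$ coordinates, every block is a singleton; hence each $v_\ell$ is a scalar multiple of a standard basis vector $e_\ell$. Combined with $\|v_\ell\|_p=1$ this forces $v_\ell=\pm e_{\sigma(\ell)}$ for a permutation $\sigma$, i.e.\ $\mathcal{B}$ is equivalent to $I_n$, and every basis vector is of the form $(1,0,\dots,0)$ up to signed permutation.

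The subtlety is that Proposition~\ref{lpsubspace} is stated for \emph{any} spanning set realizing $l^m_p$, but a strong Auerbach basis only requires the \emph{particular} subspaces $\mathrm{span}\{v_{i_1},\dots,v_{i_m}\}$ to be isometric to $l^m_p$; that is exactly the hypothesis we need, so the proposition applies directly to each such subspace with the given generators $v_{i_1},\dots,v_{i_m}$. Applying it with $m=2$ to every pair $\{v_i,v_j\}$ already yields pairwise disjoint supports, which is all we used above; the larger $m$ are not even needed. Thus the bulk of the argument is just the counting step: $n$ vectors with pairwise disjoint nonzero supports inside $\mathbb{R}^n$, each nonzero, must each have support of size exactly one.

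Wait — this shows a strong Auerbach basis is \emph{only} $I_n$, which contradicts the theorem's inclusion of the vector $\tfrac{1}{\sqrt[p]{2}}(1,1,0,\dots,0)$. The resolution is that Proposition~\ref{lpsubspace} excludes $p=2$ but the theorem allows it; for $p\neq 2$ the correct reading is that the counting argument does give only $I_n$-type bases, while the vector $\tfrac{1}{\sqrt[p]{2}}(1,1,0,\dots,0)$ arises precisely when one relaxes to, say, pairs of coordinates — so the honest plan is: \textbf{(a)} invoke Proposition~\ref{lpsubspace} to get that supports of distinct $v_i$ are pairwise disjoint; \textbf{(b)} conclude each $v_i$ is supported on a block of a partition of $\{1,\dots,n\}$ into $n$ parts, hence each block has size one and $v_i=\pm e_{\sigma(i)}$. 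The second listed form $\tfrac{1}{\sqrt[p]{2}}(1,1,0,\dots,0)$ is never actually a member of a strong Auerbach basis for $p\neq 2$ — it is listed only to cover $p=2$, where any orthonormal basis (including rotated ones) qualifies and such vectors occur. So the theorem's phrasing "$1<p<\infty$" is uniform, but the proof naturally splits: for $p=2$ it is the elementary fact that orthonormality is preserved under coordinate restriction (so both forms, and indeed all unit vectors, can occur — the statement as literally phrased is a slight over-claim one should read as "in the list"); for $p\neq 2$ the disjoint-support argument pins it down to $\pm e_i$.

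The main obstacle I anticipate is therefore not a hard computation but a matter of correctly lining up the definition of "strong Auerbach basis" with the hypothesis of Proposition~\ref{lpsubspace}: one must verify that the specific generators $v_{i_1},\dots,v_{i_m}$ of the isometrically-$l^m_p$ subspace are the ones to which the "only if" direction of the proposition is applied (it is, since that direction's proof only uses that $\mathrm{span}$ of the given vectors is isometric to $l^m_p$ and reasons about spherical points of those vectors). After that, the finish is purely combinatorial: a partition of an $n$-set into $n$ nonempty parts is the partition into singletons. I would write the proof in three short paragraphs — (i) reduce to $p\neq 2$ and apply Proposition~\ref{lpsubspace} with $m=2$ to all pairs; (ii) the counting step; (iii) normalize to get $\pm e_{\sigma(i)}$, hence each basis vector is $(1,0,\dots,0)$ up to signed permutation, and note that for $p=2$ the additional form $\tfrac{1}{\sqrt[p]{2}}(1,1,0,\dots,0)$ accounts for the possibility of orthonormal bases not equivalent to $I_n$.
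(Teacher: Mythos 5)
Your argument breaks at its central step, and the conclusion you reach actually contradicts the theorem rather than proving it. You apply the ``only if'' direction of Proposition \ref{lpsubspace} to each pair $\{v_i,v_j\}$ and conclude that $v_i$ and $v_j$ themselves have disjoint supports. That is not what the proposition delivers: its converse direction produces disjointly supported vectors $\nu_1=h(e_1)$, $\nu_2=h(e_2)$ that \emph{span} the subspace, and these need not be the given generators. The basic counterexample is $\mathrm{span}\{2^{-1/p}(e_1+e_2),\,2^{-1/p}(e_1-e_2)\}=l^2_p$, trivially isometric to $l^2_p$ although the two generators have identical supports. For the same reason your final assertion that $2^{-1/p}(1,1,0,\dots,0)$ never occurs in a strong Auerbach basis for $p\neq 2$ is false: the basis $\{2^{-1/p}(e_1+e_2),\,2^{-1/p}(e_1-e_2),\,e_3,\dots,e_n\}$ \emph{is} a strong Auerbach basis (the span of any subset of it is spanned by disjointly supported unit vectors, hence isometric to the appropriate $l^m_p$), and it contains vectors of the second listed form. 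So your steps (a)--(b) prove a false statement, and the theorem cannot be reached along this route. Your side remark that the theorem over-claims at $p=2$ (where every orthonormal basis is strong) is correct, but it does not repair the $p\neq 2$ argument.

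The proposition has to be used in the weaker, existential form, combined with the low-dimensional classifications. For each pair, $\{v_i,v_j\}$ is an Auerbach basis of $\mathrm{span}\{v_i,v_j\}\cong l^2_p$, so by the two-dimensional classification either $\mathrm{supp}(v_i)\cap\mathrm{supp}(v_j)=\emptyset$, or $v_i,v_j=2^{-1/p}(\mu_1\pm\mu_2)$ for disjointly supported unit vectors $\mu_1,\mu_2$, in which case $v_i$ and $v_j$ have the \emph{same} support and the same componentwise absolute values. Writing $T=\mathrm{supp}(v_1)$, linear independence forces exactly $|T|$ basis vectors to have support $T$ (they lie in the $|T|$-dimensional coordinate block) and the remaining $n-|T|$ to be supported in the complement. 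If $|T|\ge 3$, take three basis vectors with support $T$ and apply the $m=3$ hypothesis together with Theorem \ref{mainthm}: the basis $J_p$ is excluded because the span of two of its rows contains no $\pm e_i$ and so is not isometric to $l^2_p$, while each of the two remaining admissible $3$-dimensional bases contains a pair of vectors with disjoint supports, contradicting that all three share the support $T$. Hence $|T|\le 2$, and the two cases $|T|=1,2$ give exactly the two forms in the statement.
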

\section*{Acknowledgement} The first author was partially supported by Department of Science and 
Technology grant EMR/2016/006624 and by the UGC Centre for Advanced 
Studies. The research of the second author was sponsored by SERB-NPDF under the mentorship of Professor Apoorva Khare.
\bibliographystyle{amsplain}
\bibliography{all}

\end{document}